\newtheorem{lemma}{Lemma}[section]
\newtheorem{prop}{Proposition}
\newtheorem{thm}{Theorem}[section]
\newtheorem{definition}{Definition}[section]
\theoremstyle{remark}
\numberwithin{equation}{section}
\titleformat{\section}[hang]
  {\normalfont\Large\bfseries}{\thesection}{1em}{}
\begin{document}
\tolerance=1000  
\sloppy  
\title[]{Multiplicative and mining property for stability numbers of graphs.}
\maketitle
\begin{center}
\text{Metrose Metsidik and Lixiao Xiao} \vskip0.2in
College of Mathematical Sciences,\\ Xinjiang Normal University, Urumqi $830054$, P. R. China\\[0pt]
\textbf{E-mail:} \url{metrose@xjnu.edu.cn}.
\end{center}
\begin{abstract}
$f$-vertex stability number $vs_f(G)=\min\{|X|: X\subseteq V(G) \enspace \text{and} \enspace f(G-X)\neq f(G)\}$, and $f$-edge stability number is defined similarly by setting $X\subseteq E(G)$. In this paper, for multiplicative and mining invariant $f$, we give some general bounds for $f$-vertex/edge stability numbers of graphs and some results about the relations between the $f$-vertex/edge stability numbers of graphs and their components.
\vspace{1em}
\par
\noindent\textbf{Key words and phrases:} graph invariant, multiplicative property, mining property, vertex stability number, edge stability number.

\vspace{0.5cm}
\noindent\textbf{2020 Mathematics Subject Classi cation}:  05C15.
\end{abstract}
\maketitle
\section{Introduction}
Graph invariants are properties that remain constant under isomorphism. Rather than being tied to a specific representation of a graph, these properties generally pertain to the overall structure or characteristics of the graph. The exploration of graph invariants not only enhances our comprehension of the structural characteristics of graphs but also enables the development of general and efficient algorithms to address a wide range of problems in graph theory. As demonstrated in~\cite{ref-5,ref-3,ref-10,ref-9}, the stability of these invariants can be significantly influenced by modifications to the graph, such as the deletion of vertices or the addition/removal of edges. For instance, Haynes et al.~\cite{ref-10} have shown that altering the vertices and edges of a graph can directly impact specific invariants, including the minimum degree $\delta(G)$ and maximum degree $\Delta(G)$. 

Let $\mathcal {G}$ denote the family of simple finite graphs, and let an invariant $f$ be a function $f : \mathcal {G} \rightarrow \mathbb{R}^{+}$. Bauer et al.~\cite{ref-5} have investigated how changes in the number of vertices can affect the domination number, leading to the definition of the $f$-edge stability number. Kemnitz et al.~\cite{ref-8} have derived general bounds for the chromatic edge stability number and the chromatic bondage number. They further explored the $f$-edge chromatic stability number by setting $f=\chi^\prime(G)$ and provided bounds for the $f$-edge stability number~\cite{ref-2}. Akbari et al.~\cite{ref-4} have delved into the chromatic vertex stability number, proving that $vs_\chi(G)=ivs_\chi(G)$, where $ivs_\chi(G)$ represents the minimum number of independent vertices whose removal results in a decrease in the chromatic number $\chi(G)$.

Kemnitz and Marangio~\cite{ref-6,ref-7} have established upper and lower bounds for the edge total chromatic stability number and demonstrated a strong correlation between the chromatic number, the total chromatic number, and the $f$-edge stability number. They also provided a novel proof of the well-known Gallai's theorem using vertex/edge stability numbers~\cite{ref-11,ref-1}. Furthermore, for the disjoint union $H_1 \sqcup H_2$ of two graphs $H_1$ and $H_2$, Kemnitz and Marangio defined the additive property $f(H_1 \sqcup H_2)= f(H_1) + f(H_2)$ and the maxing property $f(H_1 \sqcup H_2)= \max \{f(H_1), f(H_2) \}$  for the invariant $f$. They showed that the difference and ratio between the edge chromatic stability number and the vertex chromatic stability number can be arbitrarily large, highlighting the intricate relationships between these graph invariants.

The additive and maxing properties of an invariant can be naturally extended to encompass multiplicative and mining properties of the invariant.
\begin{definition}
Let $H_1$ and $H_2$ be two disjoint graphs. An invariant $f$ is said to be multiplicative if it satisfies the property $f(H_1 \sqcup H_2)= f(H_1) \cdot f(H_2)$, and it is called mining if it satisfies the property $f(H_1 \sqcup H_2)= \min \{f(H_1), f(H_2) \}$.
\end{definition}

It is worth noting that many graph invariants adhere to either multiplicative or mining properties. For example, the number of spanning forests and the number of independent sets are multiplicative invariants, whereas the minimum degree and girth exhibit mining behavior.

In this paper, we investigate the $f$-vertex/edge stability numbers of finite simple graphs, focusing on invariants $f$ that exhibit multiplicative and mining properties.  In Section 2, leveraging the multiplicative and minimizing nature of $f$, we derive several upper bounds for the vertex stability number under specific conditions. We also explore generalizations of other invariants and establish relationships between the vertex stability number of a graph and its components. In Section 3, we extend our analysis to the edge stability number, presenting analogous results and further insights.

\section{Vertex stability number}
\begin{definition}
$f$-vertex stability number of a graph $G$ is defined as:
\begin{align*}
vs_f(G)=\begin{cases}\infty, \hskip 3cm\text{if } f(G-X)=f(G)\text{ for all }X\subseteq V(G);\\ \min \{|X|: X\subseteq V(G) \enspace \text{ and } \enspace f(G-X) \neq f(G)\}, \enspace\enspace \text{ otherwise }.\end{cases}
\end{align*}
\end{definition}

\begin{lemma}[\cite{ref-1}]\label{le-1}
$vs_f(G) \leq |X| + vs_f(G-X)$ for $X\subsetneq V(G)$.
\end{lemma}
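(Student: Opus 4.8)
The plan is to reduce the inequality to a short case analysis according to whether deleting $X$ already changes the value of $f$. If $vs_f(G-X)=\infty$ the right-hand side is infinite and there is nothing to prove, so I would assume $vs_f(G-X)$ is finite and fix a set $Y\subseteq V(G-X)=V(G)\setminus X$ realizing it, that is, $|Y|=vs_f(G-X)$ and $f\bigl((G-X)-Y\bigr)\neq f(G-X)$. Note that the hypothesis $X\subsetneq V(G)$ guarantees $G-X$ is a nonempty graph, so $f(G-X)$ and $vs_f(G-X)$ are meaningful.

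First I would dispose of the easy case $f(G-X)\neq f(G)$. Here $X$ is itself a stabilizing set for $G$, so directly $vs_f(G)\leq |X|\leq |X|+vs_f(G-X)$, and $Y$ is not needed.

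The substantive case is $f(G-X)=f(G)$. I would set $Z=X\cup Y$ and invoke the identity $G-Z=(G-X)-Y$, which holds because deleting $X$ and then deleting $Y$ removes exactly the vertices of $X\cup Y$. This gives $f(G-Z)=f\bigl((G-X)-Y\bigr)\neq f(G-X)=f(G)$, so $Z$ is a stabilizing set for $G$ and $vs_f(G)\leq |Z|=|X\cup Y|\leq |X|+|Y|=|X|+vs_f(G-X)$, the middle step being subadditivity of cardinality (in fact equality, since $X$ and $Y$ are disjoint).

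The only genuinely delicate point, and the reason a one-line direct argument fails, is that one cannot expect $f(G-Z)$ to differ from $f(G)$ by comparing the two values head-on; instead the comparison must be routed through the intermediate value $f(G-X)$. The case split is precisely what legitimizes this routing: when $f(G-X)=f(G)$ the relation $f(G-Z)\neq f(G-X)$ transfers to $f(G-Z)\neq f(G)$, while when $f(G-X)\neq f(G)$ the set $X$ already suffices. Finally, the $\infty$ convention is handled by observing that $vs_f(G)=\infty$ forces both $f(G-X)=f(G)$ and $vs_f(G-X)=\infty$ (otherwise one of the two cases above produces a finite stabilizing set), so the claimed inequality reads $\infty\leq\infty$.
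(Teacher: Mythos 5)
Your proof is correct and is essentially the standard argument for this lemma, which the paper itself does not prove but imports from \cite{ref-1}: the case split on whether $f(G-X)=f(G)$, taking $X$ alone in the first case and $X\cup Y$ with $Y$ a minimum stabilizing set of $G-X$ in the second, is exactly the route taken in the cited source. Your handling of the $\infty$ convention and the disjointness of $X$ and $Y$ is also sound, so there is nothing to add.
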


Clearly, for any proper subset $X\subsetneq V(G)$, the subgraph $G-X$ is an induced subgraph of $G$. Consequently, Lemma \ref{le-1} can be rephrased as follows: for any induced subgraph $H$ of $G$, $vs_f(G)\leq vs_f(H)+|V(G)|-|V(H)|$.

Set $N(W)= \bigcup_{w \in W} N(w)$ for $W\subseteq V(G)$, where $N(w)=\{v\in V(G): vw\in E(G)\}$.

\begin{thm}\label{th-1}
Let $H$ be an induce graph of $G$ and $f$ be a  nonzero multiplicative invariant. Then $vs_f(G) \leq |X| + vs_f(H)$, where $X=N(V(H)) \setminus V(H)$.
\end{thm}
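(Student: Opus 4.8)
The plan is to reduce the desired bound to Lemma~\ref{le-1} by analyzing the structure of $G-X$. The first step is to observe that deleting $X = N(V(H)) \setminus V(H)$ severs every edge joining $V(H)$ to the vertices that lie outside $H$. Indeed, if $v \in V(H)$ and $u \in V(G) \setminus (X \cup V(H))$ were adjacent, then $u \in N(V(H))$ while $u \notin V(H)$, forcing $u \in X$, a contradiction. Since $H$ is induced and $V(H) \cap X = \emptyset$, the vertices of $H$ retain exactly their original adjacencies inside $G-X$. Hence, writing $G' = (G-X) - V(H)$, we obtain the disjoint decomposition $G - X = H \sqcup G'$.

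Next I would invoke multiplicativity. Because $f$ is multiplicative, $f(G-X) = f(H) \cdot f(G')$, and since $f$ is nonzero we have $f(G') \neq 0$. The goal now becomes showing $vs_f(G-X) \leq vs_f(H)$; the theorem then follows at once from Lemma~\ref{le-1}, since $V(H) \neq \emptyset$ guarantees $X \subsetneq V(G)$, whence $vs_f(G) \leq |X| + vs_f(G-X) \leq |X| + vs_f(H)$. We may assume $vs_f(H) < \infty$, as otherwise the claimed inequality is vacuous (and if $G'$ has no vertices then $G-X = H$ and the reduction is immediate).

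To establish $vs_f(G-X) \leq vs_f(H)$, let $Y \subseteq V(H)$ with $|Y| = vs_f(H)$ be a witness to the $f$-vertex stability number of $H$, so that $f(H-Y) \neq f(H)$. Deleting this same set $Y$ from $G-X$ leaves $G'$ untouched and yields $(G-X) - Y = (H-Y) \sqcup G'$. Applying multiplicativity to both $G-X$ and $(G-X)-Y$ gives $f((G-X)-Y) = f(H-Y) \cdot f(G')$ and $f(G-X) = f(H) \cdot f(G')$. Since $f(G') \neq 0$ is a common nonzero factor, the inequality $f(H-Y) \neq f(H)$ forces $f((G-X)-Y) \neq f(G-X)$. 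Thus $Y$ witnesses a change in $f$ upon deletion from $G-X$, giving $vs_f(G-X) \leq |Y| = vs_f(H)$, as required.

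The argument is essentially bookkeeping once the decomposition $G-X = H \sqcup G'$ is in hand; the only place that genuinely uses the hypotheses on $f$ is the cancellation of the common factor $f(G')$, where the nonzero assumption is indispensable. The main point to verify carefully is therefore the structural claim that deleting the outer boundary $X$ isolates $H$ as a union of components of $G-X$ — a fact resting precisely on $X$ containing every external neighbor of $V(H)$.
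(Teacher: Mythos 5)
Your proof is correct and follows essentially the same route as the paper: reduce via Lemma~\ref{le-1} to showing $vs_f(G-X)\leq vs_f(H)$, decompose $G-X=H\sqcup H'$, and use multiplicativity together with the nonzero hypothesis to cancel the common factor $f(H')$ when a witness set is deleted from $H$. The only difference is that you explicitly verify the structural claim that $X=N(V(H))\setminus V(H)$ isolates $H$ as a union of components of $G-X$ (and that $X\subsetneq V(G)$ so Lemma~\ref{le-1} applies), details the paper's proof asserts without argument.
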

\begin{proof}
If $H=G$, the result holds trivially. By Lemma \ref{le-1}, we have $vs_f(G) \leq |X| + vs_f(G-X)$, so it suffices to show that $vs_f(G-X) \leq vs_f(H)$ when $H\neq G$. If $G-X=H$, the result is again trivial. Therefore, we assume that $G-X=H\sqcup H^{'}$, where $H$ and $H^{'}$ are two disjoint induced subgraphs of $G$ . If $vs_f(H)=\infty$, we are done. If $vs_f(H)<\infty$, then there exists a vertex subset $V^{'}\subseteq V(H)$ with $|V^{'}|= vs_f(H)$ such that $f(H) \neq f(H-V^{'})$. Since $f(G)$ is multiplicative and $f(H^{'})\neq0$,
\begin{align*}
f(G-X)&=f(H\sqcup H^{'})=f(H) \cdot f(H^{'})\\
&\neq f(H-V^{'}) \cdot f(H^{'})=f((H\sqcup H^{'})-V^{'})\\
&=f((G-X)-V^{'}),
\end{align*}
therefore $vs_f(G-X) \leq |V^{'}|= vs_f(H)$.
\end{proof}
\begin{thm}\label{th-2}
Let $f$ be a nonzero multiplicative invariant such that $f (K_1) \neq 1$, and let $G$ be a non-complete graph. Then $vs_f(G)\leq \delta(G)+1$.
\end{thm}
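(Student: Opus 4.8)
The plan is to exhibit a vertex set of size at most $\delta(G)+1$ whose deletion moves the value of $f$, built around a vertex of minimum degree. Let $v$ be a vertex of minimum degree, so that $|N(v)|=\delta(G)$, and set $X_0=N(v)$; thus $|X_0|=\delta(G)$. In $G-X_0$ the vertex $v$ has lost all of its neighbours and is therefore isolated. Since $G$ is non-complete we have $\delta(G)\le |V(G)|-2$, so $G-X_0$ contains at least one vertex besides $v$; hence we may write $G-X_0=K_1\sqcup H'$, where $K_1$ is the isolated vertex $v$ and $H'$ is a nonempty induced subgraph. This decomposition is the whole point of the construction, since it puts us in position to use multiplicativity.

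I would then split into two cases according to whether deleting $X_0$ already changes $f$. If $f(G-X_0)\neq f(G)$, then the $\delta(G)$ vertices of $X_0$ already suffice and $vs_f(G)\le\delta(G)\le\delta(G)+1$. Otherwise $f(G-X_0)=f(G)$, and I delete $v$ as well, taking $X=X_0\cup\{v\}$ with $|X|=\delta(G)+1$. Here the multiplicative property does the work: since $G-X_0=K_1\sqcup H'$ with $H'$ nonempty,
\[
f(G-X_0)=f(K_1)\cdot f(H'),\qquad f(G-X)=f(H').
\]
Because $f$ is nonzero we have $f(H')\neq 0$, and because $f(K_1)\neq 1$ the factor $f(K_1)$ cannot be cancelled; thus $f(G-X)=f(H')\neq f(K_1)f(H')=f(G-X_0)=f(G)$. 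In either case a set of at most $\delta(G)+1$ vertices changes $f$, giving $vs_f(G)\le\delta(G)+1$.

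This argument is really Theorem \ref{th-1} specialised to the one-vertex induced subgraph $H=K_1$ on a minimum-degree vertex: there $X=N(V(H))\setminus V(H)=N(v)$ has size $\delta(G)$, and the statement reduces to the claim $vs_f(K_1)=1$, which in turn follows from $f(K_1)\neq 1$. I expect the only delicate point to be the cancellation step, where both hypotheses on $f$ are used simultaneously — multiplicativity to factor $f(G-X_0)$ across the disjoint union, and $f(K_1)\neq 1$ to guarantee that stripping off the isolated vertex genuinely alters the value. The role of the non-completeness hypothesis is precisely to force $H'$ to be nonempty (equivalently $\delta(G)\le |V(G)|-2$), so that the disjoint-union factorisation applies and $f(H')$ is a legitimate nonzero factor; if one wished to avoid this assumption one would instead have to argue about the value of $f$ on the empty graph, which the present routing around $H'$ sidesteps.
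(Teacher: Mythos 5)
Your proof is correct and takes essentially the same route as the paper: both arguments isolate a minimum-degree vertex $v$ by deleting $X=N(v)$, write $G-X=K_1\sqcup H'$ with $H'$ nonempty thanks to non-completeness ($\delta(G)\le |V(G)|-2$), and use multiplicativity together with $f(K_1)\neq 1$ and $f(H')\neq 0$ to conclude that deleting $v$ from $G-X$ changes the value of $f$. The only cosmetic difference is that the paper packages the last step via Lemma~\ref{le-1}, concluding $vs_f(G)\le |X|+vs_f(G-X)=\delta(G)+1$ from $vs_f(G-X)=1$, whereas you unfold that lemma into an explicit two-case argument; your closing remarks on why the argument deliberately avoids evaluating $f$ on the empty graph are a correct reading of the same subtlety the paper's proof sidesteps.
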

\begin{proof}
Let $w$ be a vertex of $G$ such that $d(w)= \delta(G)$. Set $X=N(w)$, $H=G[\{w\}]\cong K_1$, and $H^{'}=G-(X\cup \{w\})$, then $H^{'}\neq\emptyset$ and $G-X=H\sqcup H^{'}$. Since $f$ is multiplicative and $f(H^{'})\neq0$, we have
\begin{align*}
f(G-X)&=f(H\sqcup H^{'})= f(K_1)\cdot f(H^{'})\\
&\neq f(H^{'})=f((G-X)-w).
\end{align*}
Thus $vs_f(G-X)=1$, and therefore $vs_f(G)\leq {vs_f(G-X)+|X|} =1+\delta(G)$ by Lemma \ref{le-1}.
\end{proof}
Now we transfer Theorem \ref{th-1} to mining invariants.

\begin{thm}\label{th-3}
Let $X\subsetneq V(G)$ be a vertex set such that $G-X=H\sqcup H^{'}$, and $f$ be mining with $f(H)< f(H^{'})$. Then $vs_f(G)\leq {vs_f(H)+|X|}$.
\end{thm}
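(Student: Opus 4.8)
The plan is to mirror the proof of Theorem \ref{th-1}, replacing the multiplicative factorization by the mining identity and keeping careful track of the strict inequality $f(H) < f(H')$ throughout. First I would invoke Lemma \ref{le-1} to write $vs_f(G) \leq |X| + vs_f(G-X)$, so that it suffices to establish $vs_f(G-X) \leq vs_f(H)$. If $vs_f(H) = \infty$ there is nothing to prove, so I would assume $vs_f(H) < \infty$ and fix a witness set $V' \subseteq V(H)$ with $|V'| = vs_f(H)$ and $f(H-V') \neq f(H)$.

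The core of the argument is to exhibit this same $V'$ as a witness for $G-X$. Since $f$ is mining and $f(H) < f(H')$, I would first record that
\begin{align*}
f(G-X) = f(H \sqcup H') = \min\{f(H), f(H')\} = f(H),
\end{align*}
and likewise $f((G-X)-V') = f((H-V') \sqcup H') = \min\{f(H-V'), f(H')\}$. The goal is then to show that this last minimum differs from $f(H)$. I would split on the comparison between $f(H-V')$ and $f(H)$: if $f(H-V') < f(H)$, then the new minimum equals $f(H-V') < f(H) = f(G-X)$; if instead $f(H-V') > f(H)$, then the new minimum is $\min\{f(H-V'), f(H')\}$, which is strictly larger than $f(H)$ regardless of which component attains it, since both $f(H-V') > f(H)$ and $f(H') > f(H)$. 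In either case $f((G-X)-V') \neq f(H) = f(G-X)$, so $V'$ witnesses a change in $f$ on $G-X$, giving $vs_f(G-X) \leq |V'| = vs_f(H)$ and closing the argument.

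The step I expect to be the main obstacle is the possibility that deleting $V'$ makes the minimum \emph{jump} from the $H$-component to the $H'$-component: a priori one might worry that $\min\{f(H-V'), f(H')\}$ could coincide with $f(H)$ even though $f(H-V') \neq f(H)$. The strict hypothesis $f(H) < f(H')$ is exactly what rules this out, since the only value the perturbed minimum could take that would destroy the argument is $f(H)$ itself, and that value lies strictly below $f(H')$ and, in the relevant subcase, strictly below $f(H-V')$ as well. I would therefore emphasize the strictness of the inequality rather than merely $f(H) \leq f(H')$: if $f(H) = f(H')$ and the only available perturbation \emph{increases} $f(H)$, then the minimum would settle on the unchanged component $H'$ and the conclusion could genuinely fail.
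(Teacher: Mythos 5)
Your proof is correct and takes essentially the same route as the paper's: Lemma \ref{le-1} reduces the claim to $vs_f(G-X)\leq vs_f(H)$, and the same witness set $V'$ (the paper calls it $S$) is then shown to change the value of the mining invariant on $G-X$. The only difference is that you spell out the case analysis ($f(H-V')<f(H)$ versus $f(H-V')>f(H)$) which the paper compresses into a single ``$\neq$'' step --- a sound elaboration, since the strictness of $f(H)<f(H')$ is exactly what that step needs.
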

\begin{proof}
By Lemma \ref{le-1}, we have $vs_f(G)\leq {vs_f(G-X)+|X|}$. In the following, we aim to show that $vs_f(G-X)\leq vs_f(H)$. There is nothing to show for $vs_f(H)=\infty$. Let $vs_f(H)<\infty$, then there exists a vertex subset $S\subseteq V(H)$ with $|S|=vs_f(H)$ and $f(H) \neq f(H-S)$. Since $f$ is mining, we have
\begin{align*}
f(G-X)&=f(H\sqcup H^{'})=\min \{f(H), f(H^{'})\}=f(H)\\
&\neq  \min \{f(H-S), f(H^{'})\}= f((H-S)\sqcup H^{'})\\
&=f((H^{'}\sqcup H)-S)=f((G-X)-S),
\end{align*}
therefore $vs_f(G-X)\leq |S|=vs_f(H)$ as required.
\end{proof}
In the following, we generalize the above results about disjoint union of two graphs to disjoint union of arbitrarily many graphs.

\begin{thm}\label{th-4}
Let $G=H_1\sqcup \cdots \sqcup H_k$ and $f$ be a multiplicative invariant. If $f(H_j)\neq 0,1$ for $j=1, \cdots, k$, then $vs_f(G)= \min\{vs_f(H_j), |V(H_j)| : j=1, \cdots, k\}$.
\end{thm}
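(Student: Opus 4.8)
The plan is to prove the two inequalities $vs_f(G)\le m$ and $vs_f(G)\ge m$ separately, where I abbreviate $m=\min\{vs_f(H_j),|V(H_j)|:j=1,\dots,k\}$. Throughout I write $f(G)=\prod_{j=1}^{k}f(H_j)$ by multiplicativity, and for a deletion set $X\subseteq V(G)$ I set $X_j=X\cap V(H_j)$, so that $G-X=\bigsqcup_{j}(H_j-X_j)$ and hence $f(G-X)=\prod_{j}f(H_j-X_j)$ whenever every $X_j\subsetneq V(H_j)$. The hypotheses $f(H_j)\ne0$ and $f(H_j)\ne1$ will be used repeatedly: nonvanishing lets me cancel the untouched factors, and the non-unit condition guarantees that annihilating a whole component genuinely changes the product.

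For the upper bound I exhibit, for each fixed $j$, two competing deletion strategies. First, if $vs_f(H_j)<\infty$, choose $S_j\subseteq V(H_j)$ with $|S_j|=vs_f(H_j)$ and $f(H_j-S_j)\ne f(H_j)$; since $\prod_{i\ne j}f(H_i)\ne0$, multiplying through gives $f(G-S_j)=f(H_j-S_j)\prod_{i\ne j}f(H_i)\ne f(H_j)\prod_{i\ne j}f(H_i)=f(G)$, so $vs_f(G)\le vs_f(H_j)$. Second, deleting all of $V(H_j)$ leaves $f(G-V(H_j))=\prod_{i\ne j}f(H_i)$, which differs from $f(G)=f(H_j)\prod_{i\ne j}f(H_i)$ precisely because $f(H_j)\ne1$ and $\prod_{i\ne j}f(H_i)\ne0$; hence $vs_f(G)\le|V(H_j)|$. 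Taking the minimum of the two bounds over all $j$ yields $vs_f(G)\le m$.

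The reverse inequality is the substantive direction, and I would argue it by contraposition. Let $X\subseteq V(G)$ be arbitrary with $|X|<m$; I claim $f(G-X)=f(G)$. Since $|X_j|\le|X|<m\le|V(H_j)|$, each $X_j$ is a proper subset of $V(H_j)$, so no component is annihilated and the product formula $f(G-X)=\prod_j f(H_j-X_j)$ is legitimate. Moreover $|X_j|<m\le vs_f(H_j)$ forces, by minimality in the definition of $vs_f(H_j)$, that $f(H_j-X_j)=f(H_j)$ for every $j$. Multiplying these equalities gives $f(G-X)=\prod_j f(H_j)=f(G)$. Contrapositively, any $X$ with $f(G-X)\ne f(G)$ satisfies $|X|\ge m$, so $vs_f(G)\ge m$, completing the proof.

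The main obstacle, and the place where the two hypotheses on $f(H_j)$ do their work, is the correct accounting of the minimum: one must recognise that a single component is always responsible for any change in $f(G)$, and that such a component can be attacked in exactly two ways, namely altering its own invariant by a proper vertex deletion (cost $vs_f(H_j)$) or deleting it wholesale (cost $|V(H_j)|$). Keeping these two mechanisms separate is what explains the form of the bound; in particular the term $|V(H_j)|$ is indispensable when $vs_f(H_j)=\infty$, that is, when $f(H_j)$ cannot be disturbed by any proper deletion but the component can still be removed entirely. A secondary point requiring care is the empty-graph boundary convention: one should ensure that in the lower-bound argument every $X_j$ is genuinely a proper subset, so that each $H_j-X_j$ is a nonempty graph and the product formula applies, which is exactly guaranteed by the strict inequality $|X|<m\le|V(H_j)|$.
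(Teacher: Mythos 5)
Your proposal is correct and follows essentially the same route as the paper's proof: the same two upper-bound strategies (deleting a minimum destabilizing set $S_j\subseteq V(H_j)$ when $vs_f(H_j)<\infty$, or deleting all of $V(H_j)$, where $f(H_j)\neq 1$ makes the product change and $f(H_i)\neq 0$ lets the untouched factors cancel) and the same lower-bound argument that any $X$ with $|X|<m$ leaves every factor $f(H_j-X_j)$ equal to $f(H_j)$. If anything, your write-up of the lower bound is slightly more careful than the paper's, since you explicitly decompose $X$ into the traces $X_j=X\cap V(H_j)$ and verify $X_j\subsetneq V(H_j)$ before invoking multiplicativity, a point the paper glosses over.
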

\begin{proof}
Let $N_j=\min \{vs_f(H_j), |V(H_j)|\}$ for $j=1,\cdots, k$.

If $vs_f(H_j)<\infty$, then there exists a vertex subset $X$ of $V(H_j)$ such that $|X|=vs_f(H_j)$ and $f(H_j-X)\neq f(H_j)$. Since $f$ is multiplicative and $f(H_i)\neq 0$ for $i=1, \cdots, k$,
\begin{align*}
f(G-X)&=f(H_1) \cdots f(H_{j-1})\cdot f(H_j-X)\cdot f(H_{j+1}) \cdots f(H_k)\\
&\neq f(H_1) \cdots f(H_{j-1})\cdot f(H_j)\cdot f(H_{j+1}) \cdots f(H_k)\\
&= f(G),
\end{align*}
we obtain $vs_f(G)\leq |X|=vs_f(H_j)$.

If $vs_f(H_j)=\infty$, then $N_j=|V(H_j)|$. Since $f(G)$ is multiplicative and $f(H_i)\neq 0,1$  for $i=1, \cdots, k$, we have
\begin{align*}
f(G-V(H_j))&=f(H_1 \sqcup \cdots \sqcup H_{j-1} \sqcup H_{j+1}\cup \cdots \sqcup H_k)\\
&= f(H_1) \cdots f(H_{j-1})\cdot f(H_{j+1}) \cdots f(H_k)\\
&\neq f(H_1) \cdots f(H_{j-1})\cdot f(H_j)\cdot f(H_{j+1}) \cdots f(H_k)\\
&= f(G),
\end{align*}
therefore $vs_f(G)\leq N_j$.

In the following, we prove $vs_f(G)\geq N_j$. Let $M$ be a vertex set of $V(G)$ and $|M|<N_j$. Then, by the definition of $N_j$, we obtain $f(H_j-M)= f(H_j)$ for all $j=1, \cdots ,k$. Since $f(G)$ is multiplicative, we have
\begin{align*}
f(G-M)&=f((H_1-M) \sqcup \cdots \sqcup (H_k-M))\\
&= f(H_1) \cdots f(H_k)= f(G).
\end{align*}
Thus $vs_f(G)\geq N_j$.
\end{proof}
\begin{thm}\label{th-5}
Let $G=H_1\sqcup \cdots \sqcup H_k$, $f$ be a multiplicative invariant, $f(H_i)=0$ for $i\in I$, and $vs_f(H_j)=\infty$ for $j\in J$. Then
\begin{align*}
vs_f(G)=\begin{cases}\infty,&J=\{1, \cdots, k\};\\ \sum _{i\in I} \min\{vs_f(H_i),|V(H_i)|\}, &I\neq \emptyset;\\ \min\{vs_f(H_i),|V(H_j)|: i\in \{1, \cdots, k\}\setminus J,&\\\hskip 4cm
j\in J \text{ and }f(H_j)\neq 1\},&\text{ otherwise }.\end{cases}
\end{align*}
\end{thm}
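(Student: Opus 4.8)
The plan is to reduce everything to the factorization $f(G-M)=\prod_{i=1}^{k} f(H_i-M_i)$, where $M\subseteq V(G)$ is an arbitrary deletion set and $M_i=M\cap V(H_i)$; this is immediate from multiplicativity applied component by component, once one fixes the convention $f(\emptyset)=1$ (forced by $f(H)=f(H\sqcup\emptyset)=f(H)\cdot f(\emptyset)$ whenever $f(H)\neq 0$), so that a fully deleted component simply drops out of the product. The whole problem then becomes: by how few vertices can we force $\prod_i f(H_i-M_i)$ to differ from $\prod_i f(H_i)$? The governing dichotomy is whether $f(G)=\prod_i f(H_i)$ vanishes, which is exactly the split between the presence of a zero component ($I\neq\emptyset$) and its absence. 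When $f(G)\neq 0$, a change occurs as soon as one factor changes, a disjunctive condition that will produce a ``$\min$''; when $f(G)=0$, a change forces the entire product to become nonzero, so every zero factor must be repaired at once, a conjunctive condition that will produce a ``$\sum$''. I would isolate this observation first, as it explains the shape of all three cases.

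For Case 1 ($J=\{1,\dots,k\}$) I would use that $vs_f(H_j)=\infty$ means $f(H_j-Y)=f(H_j)$ for every $Y\subseteq V(H_j)$; taking $Y=V(H_j)$ gives $f(H_j)=f(\emptyset)=1$, so no component is a zero component, $I=\emptyset$, and for any $M$ every factor is unchanged, whence $f(G-M)=\prod_j f(H_j)=f(G)$ and $vs_f(G)=\infty$. For Case 2 ($I\neq\emptyset$, so $f(G)=0$) I would prove the two inequalities separately. For the upper bound, choose for each $i\in I$ a set $M_i\subseteq V(H_i)$ of size $\min\{vs_f(H_i),|V(H_i)|\}$ making $f(H_i-M_i)\neq 0$ (either by moving $f(H_i)$ off $0$, or by deleting the component so its factor becomes $f(\emptyset)=1$), leave all nonzero components untouched, and note $f(G-M)\neq 0=f(G)$. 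For the lower bound, any $M$ with $f(G-M)\neq f(G)=0$ must satisfy $f(G-M)\neq 0$, so every zero component must be repaired individually, forcing $|M_i|\geq\min\{vs_f(H_i),|V(H_i)|\}$ for each $i\in I$ and hence $|M|\geq\sum_{i\in I}\min\{vs_f(H_i),|V(H_i)|\}$.

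Case 3 (neither a zero component nor total freezing, so $I=\emptyset$, $J\neq\{1,\dots,k\}$, $f(G)\neq 0$) is the one I would treat most carefully. For the upper bound I would enumerate the two single-component moves that genuinely change the nonzero product: altering a non-frozen component $H_i$ with $i\notin J$ at cost $vs_f(H_i)$, or completely deleting a frozen component $H_j$ with $j\in J$ at cost $|V(H_j)|$, the latter multiplying the product by $1/f(H_j)$ and therefore being effective precisely when $f(H_j)\neq 1$; taking the cheapest admissible move yields ``$\leq$''. For the lower bound I would argue that if $f(G-M)\neq f(G)$ then the product changes, so some factor $f(H_i-M_i)$ differs from $f(H_i)$; for that index either $i\notin J$, forcing $|M_i|\geq vs_f(H_i)$, or $i\in J$, in which case no proper deletion can move the factor, so $M_i=V(H_i)$ with $f(H_i)\neq 1$ and $|M_i|=|V(H_i)|$. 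In either event $|M|$ is at least the asserted minimum.

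The main obstacle I anticipate is bookkeeping at the seams between the cases, all of it tied to the value $f(\emptyset)$ and to the precise meaning of ``freezing'' a component. I must pin down the relation $I\cap J=\emptyset$ and the implication $j\in J\Rightarrow f(H_j)=1$, so that deletion is the only remaining lever on a frozen component and the ``$f(H_j)\neq 1$'' clause of Case 3 is activated in exactly the right range, and I must check that Case 1 consistently takes precedence when $J=\{1,\dots,k\}$. The genuinely structural point, rather than routine verification, is the lower bound of Case 2: I must show that repairing a vanishing product is unavoidably a simultaneous, additive task over all of $I$, so that no single clever deletion can undercut the full sum. This is where the conjunctive nature of a zero product does the real work, and it is the step I expect to require the most care.
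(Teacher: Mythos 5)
Your proposal is correct and follows the same overall strategy as the paper's proof: factor $f(G-M)$ componentwise, obtain upper bounds from the cheapest single-component move (a witness set inside an unfrozen component, or wholesale deletion of a component), and obtain lower bounds from the observation that changing the product is a disjunctive condition when $f(G)\neq 0$ and a conjunctive one when $f(G)=0$ (your Case 2 lower bound, repairing every zero factor at once, is exactly the paper's argument). Where you genuinely differ is Case 1 and the role of $f(\emptyset)$. The paper dismisses Case 1 with ``obviously,'' while you derive it: since the definition permits $X=V(H_j)$, frozenness forces $f(H_j)=f(\emptyset)=1$ for every $j$, pinning every factor. This extra care is not cosmetic but necessary: under the alternative reading in which $X$ ranges over proper subsets only (so $f(\emptyset)$ never enters), Case 1 is false --- take two frozen components with $f$-values $2$ and $3$; deleting one component entirely changes $f(G)$ from $6$ to $3$, so $vs_f(G)<\infty$. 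Thus your convention $f(\emptyset)=1$ is the one under which the first clause of the theorem actually holds. Be aware, however, that the same convention makes other clauses of the statement vacuous: it gives $vs_f(H)\leq |V(H)|$ whenever $f(H)\neq 1$, so a frozen zero component (the subcase $I\cap J\neq\emptyset$, which the paper's Case 2 proof explicitly handles by setting $T_i=V(H_i)$) and a frozen component with $f(H_j)\neq 1$ (the $|V(H_j)|$ terms of Case 3) cannot exist, and the stated minima degenerate to $\sum_{i\in I}vs_f(H_i)$ and $\min\{vs_f(H_i): i\notin J\}$. Your sub-arguments for those subcases remain valid (vacuously so), hence your plan does yield a complete proof; but contrary to the hope expressed in your final paragraph, you cannot simultaneously have $j\in J\Rightarrow f(H_j)=1$ and an ``activated'' $f(H_j)\neq 1$ clause. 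The paper silently straddles both conventions; your writeup should commit to one and state explicitly which clauses then become empty.
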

\begin{proof}
If $vs_f(H_j)=\infty$ for $i=1, \cdots, k$, then obviously $vs_f(G)=\infty$.

Let $T_i=V(H_i)$ for $i\in I$ and $vs_f(H_i)=\infty$. If $vs_f(H_i)<\infty$, then there exists a vertex subset $T_i$ of $V(H_i)$ with $|T_i|=vs_f(H_i)$ and $f(H_i-T_i)\neq f(H_i)$. Let $T=\bigcup_{i\in I}T_i$, then $|T|=\sum _{i\in I} \min\{vs_f(H_i),|V(H_i)|\}$. Since $f$ is multiplicative and $f(H_i)=0$ for $i\in I$, we have
\begin{align*}
f(G-T)&=f(\bigsqcup_{i\in I}(H_i-T_i)\sqcup\bigsqcup_{t\in \{1,\cdots,k\}\setminus I}(H_t))\\
&=\prod_{i\in I\setminus J}f(H_i-T_i)\cdot\prod_{t\in \{1,\cdots,k\}\setminus I}f(H_t)\\
&\neq 0=\prod_{i=1}^{k}f(H_i)=f(G).
\end{align*}
Thus $vs_f(G)\leq |T|$.

We will prove $vs_f(G)\geq |T|$. Let $N$ be a vertex subset of $V(G)$ and $|N|<|T|$. Then, by the definition of $T$, $f(H_i-N)=f(H_i)=0$ for some $i\in I$, and therefore $f(G-N)=0=f(G)$.

Let $J\neq \{1, \cdots, k\}$, $I=\emptyset$, $vs_f(H_i):=\min\{vs_f(H_i): i\in \{1, \cdots, k\}\setminus J\}$, and $|V(H_j)|:=\min\{|V(H_j)|: j\in J \text{ and }f(H_j)\neq 1\}$. Note that there exists a vertex subset $X_i$ of $V(H_i)$ with $|X_i|=vs_f(H_i)$ and $f(H_i-X_i)\neq f(H_i)$. Since $f(G)$ is multiplicative and $I=\emptyset$, we have
\begin{align*}
f(G-X_i)&=f(H_1\sqcup\cdots\sqcup H_{i-1}\sqcup(H_i-X_i)\sqcup H_{i+1}\sqcup \cdots\sqcup H_k)\\&=f(H_1)\cdots f(H_{i-1})f(H_i-X_i)f(H_{i+1})\cdots f(H_k)\\
&\neq f(H_1)\cdots f(H_{i-1})f(H_i)f(H_{i+1})\cdots f(H_k)=f(G),
\end{align*}
which implies $vs_f(G)\leq |X_i|=vs_f(H_i)$. If $f(H_j)\neq 1$ for some $j\in J$, then
\begin{align*}
f(G-V(H_j))&=f(H_1\sqcup\cdots\sqcup H_{j-1}\sqcup H_{j+1}\sqcup \cdots\sqcup H_k)\\&=f(H_1)\cdots f(H_{j-1})f(H_{j+1})\cdots f(H_k)\\
&\neq f(H_1)\cdots f(H_{j-1})f(H_j)f(H_{j+1})\cdots f(H_k)=f(G).
\end{align*}

Thus $vs_f(G)\leq \min\{|X_i|, |V(H_j)|\}$. In the following, we prove inverse inequality. Let $N^{'}$ be a vertex subset of $V(G)$ such that $|N^{'}|<\min\{|X_i|,$ $|V(H_j)|\}$. Then $f(H_i-N^{'})=f(H_i)$ for all $i\in \{1, \cdots, k\}$. Hence $f(G-N^{'})=f(G)$, and therefore $vs_f(G)\geq \min\{|X_i|, |V(H_j)|\}$.
\end{proof}

An invariant $f$ is called monotone increasing (resp. decreasing) if $f(G) \geq f(H)$ $(resp. f(G) \leq f(H) )$, where $H$ is an induce subgraph of $G$. We present the following two results for a multiplicative invariant that is both monotone and non-trivial.

\begin{thm}\label{th-6}
Let $f$ be a mining and monotone increasing invariant, $G=H_1\sqcup \cdots \sqcup H_k$, $f(H_i)=f(G)$ for $i\in I$, and $vs_f(H_j)=\infty$ for $j\in J$. Then
\begin{align*}
vs_f(G)=\begin{cases}\infty,&J=\{1, \cdots, k\};\\ \min\{\sum _{i\in I} |V(H_i)|,|X_t|: t\in\{1, \cdots, k\}\setminus I&\\\hskip3.5cm \text{ and }f(H_t-X_t)<f(G) \}, &I=J;\\ \min\{vs_f(H_i),|X_t|:i\in I\setminus J,\text{ and }\\\hskip2cm t\notin I\cup J\text{ and }f(H_t-X_t)<f(G)\},&otherwise.\end{cases}
\end{align*}
\end{thm}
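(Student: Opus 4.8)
The plan is to follow the same two-sided strategy used in the proofs of Theorems~\ref{th-4} and~\ref{th-5}: in each case exhibit an explicit deletion set realizing the claimed value (the upper bound), and then show that no strictly smaller set can alter $f$ (the lower bound). The first reduction I would make is to record what the two hypotheses say about a deletion set $M\subseteq V(G)$. Writing $M_i=M\cap V(H_i)$, the mining property gives $f(G-M)=\min_i f(H_i-M_i)$ and $f(G)=\min_i f(H_i)$, while monotone increasing gives $f(H_i-M_i)\le f(H_i)$ for every $i$ (since $G-M$ is an induced subgraph of $G$). Hence $f(G-M)\le f(G)$ always, so $f(G-M)\neq f(G)$ is equivalent to $f(G-M)<f(G)$, and this holds if and only if some component satisfies $f(H_i-M_i)<f(G)$. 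Thus computing $vs_f(G)$ reduces to finding the cheapest deletion that pushes at least one component's value strictly below the global minimum $f(G)$.

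With this reformulation the three cases separate cleanly. In the case $J=\{1,\dots,k\}$ every component is individually stable, so $f(H_j-X)=f(H_j)$ for all $X\subseteq V(H_j)$ and all $j$; combining this with the displayed reformulation forces $f(G-M)=f(G)$ for every $M$, giving $vs_f(G)=\infty$. For the ``otherwise'' case ($I\neq J$ and $J\neq\{1,\dots,k\}$) there is a bottleneck component $i\in I\setminus J$, i.e.\ one with $f(H_i)=f(G)$ and $vs_f(H_i)<\infty$; deleting a minimum witness inside it costs $vs_f(H_i)$ and, by the reformulation, already drops $f$. The only competing mechanism is to drag a non-bottleneck component $t\notin I\cup J$ below $f(G)$ at cost $|X_t|$, a minimum set with $f(H_t-X_t)<f(G)$, so the value is the minimum of these two families of costs. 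The lower bound follows because any $M$ with $|M|$ smaller than this minimum restricts to a set $M_i$ on each $H_i$ too small to force $f(H_i-M_i)<f(G)$, whence $f(G-M)=f(G)$.

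The delicate case, and the step I expect to be the main obstacle, is $I=J$. Here every bottleneck component is simultaneously stable, so no single bottleneck component can be made to drop below $f(G)$ by deleting a proper part of it; the cheapest local move inside the bottleneck block is therefore to delete it entirely, which explains the term $\sum_{i\in I}|V(H_i)|$, while the alternative is again to push some $t\notin I$ below $f(G)$ at cost $|X_t|$. Proving the upper bound for the term $\sum_{i\in I}|V(H_i)|$ requires checking that deleting all of $\bigcup_{i\in I}V(H_i)$ genuinely changes $f(G)$ rather than leaving it fixed, and the lower bound requires arguing that every cheaper $M$ leaves at least one bottleneck component essentially intact, so that the mining minimum is still attained at value $f(G)$. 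The bookkeeping needed to show that a sub-threshold deletion cannot neutralize all bottleneck components while also failing to create a new, strictly smaller component value is where I would spend the most care, and it is precisely where the interaction between the mining identity and the monotone-increasing inequality must be used most carefully.
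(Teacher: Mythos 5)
Your overall architecture (case analysis with explicit witness sets for the upper bound, plus a matching lower bound) is the same as the paper's, but the reformulation you build everything on contains a genuine error that propagates through the argument. You claim that for every deletion set $M$, writing $M_i=M\cap V(H_i)$, one has $f(G-M)=\min_i f(H_i-M_i)\le \min_i f(H_i)=f(G)$, so that $f(G-M)\neq f(G)$ holds if and only if some component value drops strictly below $f(G)$. This is false when $M$ contains the entire vertex set of some components: $H_i-M_i$ is then the null graph and contributes nothing to the mining minimum, so in fact $f(G-M)=\min\{f(H_i-M_i): M_i\neq V(H_i)\}$, which can strictly exceed $f(G)$. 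This is exactly the mechanism behind the term $\sum_{i\in I}|V(H_i)|$ in the case $I=J$: the paper's proof computes $f\bigl(G-\bigsqcup_{i\in I}V(H_i)\bigr)=\min\{f(H_t): t\in\{1,\dots,k\}\setminus I\}\neq f(G)$, i.e., deleting all the minimum-attaining components raises $f$ strictly above $f(G)$ --- precisely what your reformulation asserts can never happen. You half-notice the issue (``requires checking that deleting all of $\bigcup_{i\in I}V(H_i)$ genuinely changes $f(G)$''), but under your Step 1 that check would come out negative, so the proposal as written cannot yield the $I=J$ case of the theorem.

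The same oversight infects the other cases. In the case $J=\{1,\dots,k\}$ you conclude $vs_f(G)=\infty$ ``by the reformulation,'' but a set $M$ swallowing all components indexed by $I$ can change $f$ even though every component is individually stable; the paper's own proof sidesteps this by quantifying only over $X$ with $V(H_i)\nsubseteq X$ for some $i\in I$, and an honest argument must address such $M$ explicitly. Likewise, your lower bound in the ``otherwise'' case argues that a sub-threshold $M$ restricts on each $H_i$ to a set ``too small to force $f(H_i-M_i)<f(G)$,'' hence $f(G-M)=f(G)$ --- again ignoring that $M$ might delete whole components of $I$ and raise $f$; you would need to show that any such $M$ costs at least $\min\{vs_f(H_i),|X_t|\}$, which is not automatic. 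Note also that ``otherwise'' (i.e., $I\neq J$ and $J\neq\{1,\dots,k\}$) does not by itself guarantee $I\setminus J\neq\emptyset$: one could have $I\subsetneq J$, in which case the ``bottleneck component'' you invoke does not exist (the paper's proof shares this lacuna, but your plan inherits rather than repairs it). In short: right proof skeleton, essentially the paper's, but the key lemma fails precisely at the full-component deletions that generate the nontrivial terms of the formula, and the $I=J$ lower bound --- which you yourself flag as the main obstacle --- is left undone.
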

\begin{proof}
If $vs_f(H_i)=\infty$ for $i=1, \cdots, k$, then $$f(G-X)=\min\{f(H_1-X), \cdots, f(H_k-X)\}=f(H_i-X)=f(G)$$ for all $X\subsetneq V(G)$, where $V(H_i)\nsubseteq X$, and therefore $vs_f(G)=\infty$.

Let $I=J\neq\{1, \cdots, k\}$. Then
\begin{align*}
f(G-\bigsqcup_{i\in I}V(H_i))&=f(\bigsqcup_{t\in \{1, \cdots, k\}\setminus I}H_t)\\
&= \min\{f(H_t): t\in \{1, \cdots, k\}\setminus I\}\\
&\neq f(G).
\end{align*}
Hence $vs_f(G)\leq \sum _{i\in I} |V(H_i)|$. By the definition of $f$-vertex stability number, $vs_f(G)\leq |X_t|$.

If $X\subset V(G)$ such that $|X|<\min\{\sum _{i\in I} |V(H_i)|,|X_t|\}$, then $V(H_i)\nsubseteq X$ for some $i\in I$ and $f(H_t-X)\geq f(G)$ for all $t\in \{1, \cdots, k\}\setminus I$. Therefore
\begin{align*}
f(G-X)&=f(H_i-X)=f(H_i)=f(G).
\end{align*}
Thus $vs_f(G)\geq \min\{\sum _{i\in I} |V(H_i)|,|X_t|\}$.

Notice that $I\neq\emptyset$ for mining $f$. Let $I\neq J$ and $i\in I\setminus J$. Then there exists a vertex subset $X_i$ of $V(H_i)$ with $|X_i|=vs_f(H_i)$ and $f(H_i-X_i)\neq f(H_i)$. Since $f$ is monotone increasing and $f(H_i)=f(G)$,
\begin{align*}
f(G-X_i)&=f(H_1\sqcup\cdots\sqcup(H_i-X_i)\cdots\sqcup H_k)\\
&= \min\{f(H_1), \cdots,f(H_i-X_i), \cdots, f(H_k)\}\\
&= f(H_i-X_i)<f(H_i)=f(G).
\end{align*}
Thus $vs_f(G)\leq\min\{vs_f(H_i): i\in I\setminus J\}$. Again by the definition of $f$-vertex stability number, $vs_f(G)\leq |X_t|$. It is easy to prove the reverse inequality.
\end{proof}
\begin{thm}\label{th-118}
Let $f$ be a mining and monotone decreasing invariant, $G=H_1\sqcup \cdots \sqcup H_k$, $f(H_i)=f(G)$ for $i\in I$, and $vs_f(H_j)=\infty$ for $j\in J$. Then
\begin{align*}
vs_f(G)=\begin{cases}\infty,&J=\{1, \cdots, k\};\\ \sum_{i\in I\setminus J}vs_f(H_i)+\sum_{i\in I\cap J}|V(H_i)|,&otherwise.\end{cases}
\end{align*}
\end{thm}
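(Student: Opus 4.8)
The plan is to exploit two structural facts. First, since $f$ is mining, $f(G)=\min\{f(H_1),\dots,f(H_k)\}$, so the set $I=\{i:f(H_i)=f(G)\}$ of minimizing components is nonempty. Second, since $f$ is monotone decreasing, writing $X_j:=X\cap V(H_j)$ for $X\subseteq V(G)$ gives $f(H_j-X_j)\ge f(H_j)$ on every surviving component, whence $f(G-X)=\min\{f(H_j-X_j):X_j\subsetneq V(H_j)\}\ge f(G)$. Consequently $f(G-X)\neq f(G)$ can only mean $f(G-X)>f(G)$, i.e. the minimum is strictly raised; this happens precisely when every minimizer $i\in I$ is \emph{neutralized}, meaning either $H_i$ is deleted entirely or $f(H_i-X_i)>f(G)$. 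The whole proof reduces to computing the cheapest way to neutralize all of $I$ simultaneously.

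For the first case $J=\{1,\dots,k\}$ I would first establish the key (and most delicate) point: if $vs_f(H_j)=\infty$, then deleting all but one vertex of $H_j$ leaves a copy of $K_1$ without changing $f$, so $f(H_j)=f(K_1)$. Hence when $J=\{1,\dots,k\}$ all components share the common value $f(K_1)$, which forces $I=\{1,\dots,k\}$ and $f(G)=f(K_1)$. Now for any proper $X\subsetneq V(G)$ some component $H_j$ survives with $X_j\subsetneq V(H_j)$, and internal stability gives $f(H_j-X_j)=f(H_j)=f(G)$; since every surviving component is then a minimizer, $f(G-X)=f(G)$, so $vs_f(G)=\infty$. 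I regard this forced coincidence $f(H_j)=f(K_1)$ as the main obstacle, since without it one could try to delete all of $I$ and seemingly raise the minimum, contradicting the claimed value $\infty$.

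For the remaining case $J\neq\{1,\dots,k\}$, set $N=\sum_{i\in I\setminus J}vs_f(H_i)+\sum_{i\in I\cap J}|V(H_i)|$. For the bound $vs_f(G)\le N$ I would build an explicit $X$: for each $i\in I\setminus J$ pick a minimum $X_i\subseteq V(H_i)$ with $|X_i|=vs_f(H_i)$ and $f(H_i-X_i)\neq f(H_i)$, which by monotonicity means $f(H_i-X_i)>f(G)$; for each $i\in I\cap J$ take $X_i=V(H_i)$; and leave the non-minimizers untouched. Then $G-X$ consists of the raised minimizers together with the untouched non-minimizers, all carrying values $>f(G)$, and the case hypothesis guarantees this union is nonempty, so $f(G-X)>f(G)$ with $|X|=N$, giving $vs_f(G)\le N$ directly from the definition.

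For the matching lower bound $vs_f(G)\ge N$, suppose $|X|<N$ and aim to show $f(G-X)=f(G)$. If instead $f(G-X)>f(G)$, then every minimizer is neutralized. For $i\in I\cap J$ internal deletion never changes $f(H_i)$, so neutralization forces $X_i=V(H_i)$ and $|X_i|=|V(H_i)|$; for $i\in I\setminus J$ neutralization forces $f(H_i-X_i)\neq f(H_i)$ or full removal, so $|X_i|\ge vs_f(H_i)$. Summing these disjoint contributions over $I$ yields $|X|\ge\sum_{i\in I}|X_i|\ge N$, a contradiction. Since $N\le|V(G)|$ ensures $G-X$ stays nonempty throughout, this gives $vs_f(G)\ge N$, and combined with the previous paragraph $vs_f(G)=N$. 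The only routine bookkeeping left is verifying that the deleted sets on distinct components are disjoint and that $vs_f(H_i)\le|V(H_i)|$ makes full removal never cheaper than an internal optimum for $i\in I\setminus J$.
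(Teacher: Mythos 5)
Your proposal is correct and follows essentially the same route as the paper's proof: for $J\neq\{1,\dots,k\}$ you use the identical witness set $X=\bigcup_{i\in I\setminus J}X_i\cup\bigcup_{i\in I\cap J}V(H_i)$ for the upper bound and the same pigeonhole argument (some $i\in I$ is left ``un-neutralized,'' so a surviving component still attains the minimum $f(G)$) for the lower bound. One point worth noting: your observation that $vs_f(H_j)=\infty$ forces $f(H_j)=f(K_1)$ (by deleting all but one vertex) is a genuine addition --- the paper dismisses the case $J=\{1,\dots,k\}$ as ``easy,'' but without this observation one must still rule out that deleting an entire minimizing component raises the minimum, so your treatment of that case is actually more complete than the paper's.
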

\begin{proof}
If $J=\{1, \cdots, k\}$, then again it is easy to prove that $vs_f(G)=\infty$.

There exists a vertex subset $X_i$ of $V(H_i)$ with $|X_i|=vs_f(H_i)$ and $f(H_i-X_i)\neq f(H_i)$ for all $i\in I\setminus J$. Let $X=\bigcup_{i\in I\setminus J}X_i\cup\bigcup_{i\in I\cap J}V(H_i)$. Since $f$ is monotone decreasing and $f(H_i)=f(G)$ for $i\in I$,
\begin{align*}
f(G-X)&=f(\bigsqcup_{i\in I\setminus J}(H_i-X_i)\sqcup\bigsqcup_{t\in \{1,\cdots,k\}\setminus I}H_t)\\
&= \min\{f(H_i-X_i),f(H_t):i\in I\setminus J\text{ and }t\in \{1,\cdots,k\}\setminus I\}\\
&>f(G).
\end{align*}
Thus $vs_f(G)\leq |X|$.

Now we prove $vs_f(G)\geq |X|$. Let $T$ be a vertex subset of $V(G)$ with $|T|< |X|$. By the selection of $|X|$, we have $f(H_i-T)=f(H_i)$ for some $i\in I\setminus J$ or $V(H_j)\nsubseteq T$ for  some $j\in I\cap J$. Then $$f(G-T)=f(H_i-T)= f(G)\text{ or } f(G-T)=f(H_j-T)= f(G),$$ and therefore the result follows.
\end{proof}

In the following, we investigate the vertex total chromatic stability number $vs_{\chi^{\prime\prime}}(G)$ of $G$. The proper $k$-vertex (resp. edge) coloring of $G$ is a function $c: V(G)$ (resp. $E(G))\rightarrow \{1, \cdots, k\}$ with $c(x)\neq c(y)$ for $xy\in E(G)$ (resp. adjacent edges $x$ and $y$). The vertex (resp. edge) chromatic number $\chi(G)$ (resp. $\chi^{'}(G)$) of $G$ is the minimum $k$ such that $G$ has a proper $k$-vertex (resp. $k$-edge) coloring.

The proper total $k$-coloring of $G$ is a function $c: V(G)\cup E(G)\rightarrow {1, \cdots, k}$ such that:
\begin{enumerate}[label=(\arabic*)]
    \item \label{it1} $G$ is properly $k$-vertex coloring.
    \item  \label{it2} $G$ is properly $k$-edge coloring.
    \item \label{it3} incident vertices and edges receive different colors.
   \end{enumerate}
The minimum number $k$ of the proper total $k$-coloring of $G$ is called proper total chromatic number $\chi^{\prime\prime}(G)$ of $G$. Note that $\Delta(G)+1\leq \chi^{\prime\prime}(G)$.

The well-known Total Coloring Conjecture \cite{ref-12} proposes that $$\chi^{\prime\prime}(G) \leq \Delta(G)+2$$ for any simple graph. Define $class(G)=\chi^{\prime\prime}(G)-\Delta(G)\ (\in \{1, 2\})$.

\begin{prop}\label{pr-1}
$vs_{\chi^{\prime\prime}}(G)\geq vs_\Delta(G)$ for $\chi^{\prime\prime}(G)= \Delta(G)+1$.
\end{prop}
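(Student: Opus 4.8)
The plan is to prove that every vertex set which changes $\chi^{\prime\prime}$ must be at least as large as the smallest vertex set which changes $\Delta$. Concretely, I would show that if $X \subseteq V(G)$ satisfies $|X| < vs_\Delta(G)$, then $\chi^{\prime\prime}(G-X) = \chi^{\prime\prime}(G)$; since no such $X$ can witness a change in the total chromatic number, this forces $vs_{\chi^{\prime\prime}}(G) \geq vs_\Delta(G)$.

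Fix $X$ with $|X| < vs_\Delta(G)$. By the definition of the $\Delta$-vertex stability number, removing fewer than $vs_\Delta(G)$ vertices cannot alter the maximum degree, so $\Delta(G-X) = \Delta(G)$. Combining this with the standard bound $\Delta(H) + 1 \leq \chi^{\prime\prime}(H)$ (recorded just before the statement) applied to $H = G-X$, and with the hypothesis $\chi^{\prime\prime}(G) = \Delta(G) + 1$, yields $\chi^{\prime\prime}(G-X) \geq \Delta(G-X) + 1 = \Delta(G) + 1 = \chi^{\prime\prime}(G)$.

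For the reverse inequality I would invoke the monotonicity of $\chi^{\prime\prime}$ under taking induced subgraphs: since $G-X$ is an induced subgraph of $G$, restricting any optimal proper total coloring of $G$ to $V(G-X) \cup E(G-X)$ produces a proper total coloring of $G-X$, because every vertex, edge, and incidence constraint present in $G-X$ is already satisfied in $G$. Hence $\chi^{\prime\prime}(G-X) \leq \chi^{\prime\prime}(G)$, and together with the previous paragraph this gives $\chi^{\prime\prime}(G-X) = \chi^{\prime\prime}(G)$, as required.

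The only genuinely non-routine ingredient is the monotonicity $\chi^{\prime\prime}(G-X) \leq \chi^{\prime\prime}(G)$, that is, the observation that a total coloring restricts cleanly to an induced subgraph; everything else is bookkeeping with the definitions and the inequality $\Delta + 1 \leq \chi^{\prime\prime}$. I would also treat the degenerate case $vs_\Delta(G) = \infty$ separately: there $\Delta(G-X) = \Delta(G)$ holds for every proper vertex subset $X$, so the same two-sided estimate gives $\chi^{\prime\prime}(G-X) = \chi^{\prime\prime}(G)$ throughout, forcing $vs_{\chi^{\prime\prime}}(G) = \infty$ so that the inequality holds trivially.
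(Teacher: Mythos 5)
Your proof is correct and is essentially the paper's argument in contrapositive form: the paper takes a minimal set $X$ with $\chi^{\prime\prime}(G-X)\neq\chi^{\prime\prime}(G)$ and chains $\Delta(G-X)+1\leq \chi^{\prime\prime}(G-X)<\chi^{\prime\prime}(G)=\Delta(G)+1$ to conclude $\Delta(G-X)<\Delta(G)$, relying on exactly your two ingredients, the bound $\Delta+1\leq\chi^{\prime\prime}$ and the monotonicity of $\chi^{\prime\prime}$ under vertex deletion (the latter used implicitly to turn $\neq$ into $<$). You merely spell out that monotonicity step and the degenerate case $vs_\Delta(G)=\infty$, both of which the paper leaves tacit.
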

\begin{proof}
It is obvious that $vs_{\chi^{\prime\prime}}(G)=vs_\Delta(G)=\infty$ for the empty graph $G$. So we assume that $E(G)\neq \emptyset$. Let $X$ be a vertex subset of $V(G)$ with $|X|=vs_{\chi^{\prime\prime}}(G)$ and $\chi^{\prime\prime}(G-X)\neq \chi^{\prime\prime}(G)$. Then
\begin{align*}
\Delta(G-X)+1&\leq \chi^{\prime\prime}(G-X)< \chi^{\prime\prime}(G)=\Delta(G)+1,
\end{align*}
and therefore $vs_\Delta(G) \leq |X|=vs_{\chi^{'}}(G)$.
\end{proof}
\begin{prop}\label{pr-2}
If the Total Coloring Conjecture is true and $\chi^{\prime\prime}(G)=\Delta(G)+2$, then $vs_{\chi^{\prime\prime}}(G)=\min\{vs_\Delta(G), vs_{class}(G)\}$.
\end{prop}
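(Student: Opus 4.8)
The plan is to exploit the defining identity $\chi^{\prime\prime}(H)=\Delta(H)+class(H)$, which holds for every graph $H$ by the definition of $class$, together with two structural facts: under the Total Coloring Conjecture one has $class(H)\in\{1,2\}$, and $\Delta$ is non-increasing under vertex deletion, so that $\Delta(G-X)\leq\Delta(G)$ for every $X\subseteq V(G)$ (since $G-X$ is an induced subgraph). Because we assume $class(G)=2$, the value $\chi^{\prime\prime}(G)=\Delta(G)+2$ sits at the top of the admissible range, and the whole argument reduces to tracking how deleting few vertices can move either summand.

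For the upper bound $vs_{\chi^{\prime\prime}}(G)\leq\min\{vs_\Delta(G),vs_{class}(G)\}$ I would treat the two candidate sets separately. First take $X$ with $|X|=vs_\Delta(G)$ and $\Delta(G-X)\neq\Delta(G)$; monotonicity forces $\Delta(G-X)\leq\Delta(G)-1$, whence $\chi^{\prime\prime}(G-X)=\Delta(G-X)+class(G-X)\leq(\Delta(G)-1)+2<\Delta(G)+2=\chi^{\prime\prime}(G)$, so $\chi^{\prime\prime}$ already changes and $vs_{\chi^{\prime\prime}}(G)\leq vs_\Delta(G)$. Next take $X$ with $|X|=vs_{class}(G)$; since $class(G)=2$ and $class\in\{1,2\}$, we must have $class(G-X)=1$, and using $\Delta(G-X)\leq\Delta(G)$ we get $\chi^{\prime\prime}(G-X)=\Delta(G-X)+1\leq\Delta(G)+1<\chi^{\prime\prime}(G)$, giving $vs_{\chi^{\prime\prime}}(G)\leq vs_{class}(G)$.

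For the reverse inequality I would argue contrapositively: let $X\subseteq V(G)$ satisfy $|X|<\min\{vs_\Delta(G),vs_{class}(G)\}$. Then, directly from the two stability definitions, $\Delta(G-X)=\Delta(G)$ and $class(G-X)=class(G)$ hold simultaneously, so $\chi^{\prime\prime}(G-X)=\Delta(G-X)+class(G-X)=\Delta(G)+2=\chi^{\prime\prime}(G)$. Hence no set smaller than the minimum can alter $\chi^{\prime\prime}$, which yields $vs_{\chi^{\prime\prime}}(G)\geq\min\{vs_\Delta(G),vs_{class}(G)\}$ and, combined with the previous paragraph, completes the proof. The infinite cases are consistent with this reading: if $vs_{class}(G)=\infty$ because no deletion can drop the class from $2$, the minimum is simply $vs_\Delta(G)$, and the same inequalities still apply.

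I expect the main subtlety to be the $vs_{class}$ half of the upper bound: lowering $class$ from $2$ to $1$ forces $\chi^{\prime\prime}$ to change only because $\Delta$ cannot simultaneously increase to compensate. This is precisely where monotonicity of $\Delta$ under vertex deletion is indispensable, and it is also the one place where dropping the hypothesis $\chi^{\prime\prime}(G)=\Delta(G)+2$ (i.e. starting from $class(G)=2$) would break the clean ``$\min$'' formula, since from $class(G)=1$ an increase of $class$ could be cancelled by a decrease of $\Delta$.
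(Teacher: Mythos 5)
Your proposal is correct and follows essentially the same route as the paper's proof: the upper bound via the two witness sets (a $\Delta$-reducing set bounded through $\chi^{\prime\prime}(G-X)\leq\Delta(G-X)+2$ from the Total Coloring Conjecture, and a class-reducing set bounded through $\chi^{\prime\prime}(G-X)=\Delta(G-X)+1\leq\Delta(G)+1$), and the lower bound by observing that any set smaller than both stability numbers leaves $\Delta$ and $class$ unchanged, hence $\chi^{\prime\prime}$ unchanged. Your contrapositive phrasing of the lower bound is just a restatement of the paper's proof by contradiction on a minimal witness $T$, so there is no substantive difference.
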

\begin{proof}
Note that $G$ is nonempty graph. Let $X$ be a vertex subset of $V(G)$ such that $|X|=vs_\Delta(G)$ and $\Delta(G-X)< \Delta(G)$. We have
\begin{align*}
\chi^{\prime\prime}(G-X)&\leq \Delta(G-X)+2< \Delta(G)+2=\chi^{\prime\prime}(G),
\end{align*}
therefore $vs_{\chi^{\prime\prime}}(G)\leq |X|=vs_\Delta(G)$. Let $Y$ be a vertex subset of $V(G)$ such that $|Y|=vs_{class}(G)$ and $class(G-Y)=1$. We obtain
\begin{align*}
\chi^{\prime\prime}(G-Y)&=\Delta(G-Y)+1<\Delta(G)+2=\chi^{\prime\prime}(G),
\end{align*}
and therefore $vs_{\chi^{\prime\prime}}(G)\leq |Y|=vs_{class}(G)$.

In the following, we prove $vs_{\chi^{\prime\prime}}(G)\geq \min\{vs_\Delta(G), vs_{class}(G)\}$. Let $T$ be a vertex subset of $V(G)$ such that $|T|=vs_{\chi^{\prime\prime}}(G)$ and  $\chi^{\prime\prime}(G-T)<\chi^{\prime\prime}(G)$. If $\Delta(G-T)=\Delta(G)$ and $class(G-T)=2$, then $$ \Delta(G)+2=\Delta(G-T)+2=\chi^{\prime\prime}(G-T)<\chi^{\prime\prime}(G)=\Delta(G)+2,$$ a contradiction. Therefore $\Delta(G-T)<\Delta(G)$ or $class(G-T)=1$. If $\Delta(G-T)<\Delta(G)$, then $|T|\geq vs_\Delta(G)$. If $class(G-T)=1$, then $class(G-T)<2=class(G)$, and therefore $|T|\geq vs_{class}(G)$.
\end{proof}

We can consider the value of $vs_{\chi^{'}}$, where $\chi^{'}$ is edge chromatic number. The edge chromatic number $\chi^{'}$ of a simple graph $G$ equals either $\Delta(G)$ or $\Delta(G)+1$ by Vizing's Theorem \cite{ref-13}. Let $class^{'}(G)=\chi^{'}(G)-\Delta(G)+1\ (\in \{1, 2\})$. Using similar methods of Propositions~\ref{pr-1}~and~\ref{pr-2}, we have the following two propositions.

\begin{prop}\label{pr-3}
$vs_{\chi^{'}}(G)\geq vs_\Delta(G)$ for $\chi^{'}(G)= \Delta(G)$.
\end{prop}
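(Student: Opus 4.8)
The plan is to transcribe the argument of Proposition~\ref{pr-1}, replacing the total chromatic number by the edge chromatic number and the bound $\chi^{\prime\prime}\geq\Delta+1$ by Vizing's lower bound $\chi^{\prime}\geq\Delta$. First I would dispose of the trivial case: if $G$ is empty, then $\chi^{\prime}(G-X)=\chi^{\prime}(G)=0$ for every $X\subseteq V(G)$, so $vs_{\chi^{\prime}}(G)=\infty$ and the inequality holds at once. Hence I assume $E(G)\neq\emptyset$ and let $X\subseteq V(G)$ realise the stability number, that is $|X|=vs_{\chi^{\prime}}(G)$ and $\chi^{\prime}(G-X)\neq\chi^{\prime}(G)$.

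The heart of the proof is the inequality chain
\[
\Delta(G-X)\ \leq\ \chi^{\prime}(G-X)\ <\ \chi^{\prime}(G)\ =\ \Delta(G),
\]
from which $\Delta(G-X)<\Delta(G)$, hence $\Delta(G-X)\neq\Delta(G)$, and therefore $vs_\Delta(G)\leq |X|=vs_{\chi^{\prime}}(G)$. The leftmost inequality is the elementary lower bound underlying Vizing's theorem: a vertex of degree $\Delta(G-X)$ forces at least that many distinct colours on its incident edges. The final equality is exactly the hypothesis $\chi^{\prime}(G)=\Delta(G)$. So everything reduces to justifying the strict middle inequality.

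The only point at which the argument could fail, and thus the step I would treat most carefully, is upgrading the known relation $\chi^{\prime}(G-X)\neq\chi^{\prime}(G)$ to the strict inequality $\chi^{\prime}(G-X)<\chi^{\prime}(G)$. For this I would use that $\chi^{\prime}$ is monotone increasing under vertex deletion: restricting any proper edge colouring of $G$ to the induced subgraph $G-X$ remains proper, so $\chi^{\prime}(G-X)\leq\chi^{\prime}(G)$, and combined with $\neq$ this gives the strict inequality. Without this monotonicity one could a priori imagine $\chi^{\prime}(G-X)=\Delta(G)+1$ while $\Delta(G-X)=\Delta(G)$, which would break the chain; excluding this possibility is the crux, and it is precisely the edge-colouring counterpart of the step carried out in Proposition~\ref{pr-1}. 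Assembling these three facts completes the proof in the same manner as Propositions~\ref{pr-1}~and~\ref{pr-2}.
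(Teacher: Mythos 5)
Your proposal is correct and follows exactly the route the paper intends: the paper gives no separate proof of this proposition, stating only that it follows by "similar methods" to Proposition~\ref{pr-1}, and your transcription --- trivial empty case, the chain $\Delta(G-X)\leq\chi^{\prime}(G-X)<\chi^{\prime}(G)=\Delta(G)$, and the conclusion $vs_\Delta(G)\leq|X|=vs_{\chi^{\prime}}(G)$ --- is precisely that adaptation, with the lower bound $\Delta+1\leq\chi^{\prime\prime}$ correctly replaced by $\Delta\leq\chi^{\prime}$. Your explicit justification of the strict middle inequality via monotonicity of $\chi^{\prime}$ under vertex deletion is a step the paper's model proof leaves implicit, so you are if anything slightly more careful than the original.
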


\begin{prop}\label{pr-14}
If $\chi^{'}(G)=\Delta(G)+1$, then $vs_{\chi^{'}}(G)=\min\{vs_\Delta(G), vs_{class^{'}}(G)\}$.
\end{prop}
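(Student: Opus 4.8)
The plan is to mirror the proof of Proposition~\ref{pr-2}, but invoking Vizing's Theorem in place of the Total Coloring Conjecture. The structural parallel is exact: whereas $\chi''$ lies in $\{\Delta+1,\Delta+2\}$, the edge chromatic number $\chi'$ lies in $\{\Delta,\Delta+1\}$, so $class'(G)=\chi'(G)-\Delta(G)+1$ plays the role of $class(G)$. Since we are in the case $\chi'(G)=\Delta(G)+1$, we have $class'(G)=2$, and the argument will track whether a vertex deletion lowers $\Delta$ or lowers $class'$ (equivalently, moves $G$ from class two to class one).

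First I would prove the two upper bounds. Let $X$ realize $vs_\Delta(G)$, so $\Delta(G-X)<\Delta(G)$; then by Vizing, $\chi'(G-X)\le\Delta(G-X)+1<\Delta(G)+1=\chi'(G)$, giving $vs_{\chi'}(G)\le|X|=vs_\Delta(G)$. Next let $Y$ realize $vs_{class'}(G)$, so $class'(G-Y)=1$, i.e.\ $\chi'(G-Y)=\Delta(G-Y)$. I would note that deletion cannot increase $\Delta$, so $\Delta(G-Y)\le\Delta(G)$; hence $\chi'(G-Y)=\Delta(G-Y)\le\Delta(G)<\Delta(G)+1=\chi'(G)$, yielding $vs_{\chi'}(G)\le|Y|=vs_{class'}(G)$. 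Combining the two gives $vs_{\chi'}(G)\le\min\{vs_\Delta(G),vs_{class'}(G)\}$.

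For the reverse inequality I would argue by contradiction on a minimizing set $T$ with $|T|=vs_{\chi'}(G)$ and $\chi'(G-T)<\chi'(G)=\Delta(G)+1$. If we had both $\Delta(G-T)=\Delta(G)$ and $class'(G-T)=2$, then $\chi'(G-T)=\Delta(G-T)+1=\Delta(G)+1=\chi'(G)$, contradicting $\chi'(G-T)<\chi'(G)$. Hence either $\Delta(G-T)<\Delta(G)$, forcing $|T|\ge vs_\Delta(G)$, or $class'(G-T)=1<2=class'(G)$, forcing $|T|\ge vs_{class'}(G)$; in either case $|T|\ge\min\{vs_\Delta(G),vs_{class'}(G)\}$.

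The only step requiring slight care—and the one I would flag as the main subtlety rather than a genuine obstacle—is the second upper bound, because unlike the $\chi''$ case the inequality $\Delta(G-Y)\le\Delta(G)$ does not collapse to equality, so one must use monotonicity of $\Delta$ under vertex deletion explicitly to conclude $\chi'(G-Y)<\chi'(G)$. Everything else is a direct transcription of the Vizing dichotomy into the two-case analysis, exactly as the paper indicates by the phrase ``using similar methods of Propositions~\ref{pr-1}~and~\ref{pr-2}.''
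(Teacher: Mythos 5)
Your proposal is correct and takes exactly the route the paper intends: Proposition~\ref{pr-14} is stated without a separate proof precisely because it is the verbatim transcription of the proof of Proposition~\ref{pr-2}, with Vizing's Theorem ($\chi'(G)\in\{\Delta(G),\Delta(G)+1\}$) replacing the Total Coloring Conjecture and $class'$ replacing $class$, and your two upper bounds plus the contradiction argument for the lower bound match that template step for step. The one ``subtlety'' you flag is not actually a divergence: the paper's proof of Proposition~\ref{pr-2} also uses the monotonicity $\Delta(G-Y)\leq\Delta(G)$ implicitly in the step $\chi''(G-Y)=\Delta(G-Y)+1<\Delta(G)+2$, so your argument runs parallel to it there as well.
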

\section{Edge stability number}
\begin{definition}
$f$-edge stability number of a graph $G$ is defined as:
\begin{align*}
es_f(G)=\begin{cases}\infty, \hskip 3cm\text{if } f(G-Y)=f(G)\text{ for all }Y\subseteq E(G);\\ \min \{|Y|: Y\subseteq E(G) \enspace \text{ and } \enspace f(G-Y) \neq f(G)\}, \enspace\enspace \text{ otherwise }.\end{cases}
\end{align*}    
\end{definition}

Notice that $es_f(G)= \infty$ if $G$ is an empty graph.

\begin{lemma}[\cite{ref-2}]\label{le-2}
Let $Y\subseteq E(G)$ and $H=G-Y$. Then $es_f(G)\leq {es_f(H)+|Y|}$.
\end{lemma}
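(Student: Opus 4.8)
The plan is to mirror the argument already used for the vertex-stability analogue in Lemma~\ref{le-1}, transporting the reasoning from vertex deletion to edge deletion. The statement to prove is
\[
es_f(G)\leq es_f(H)+|Y|,\qquad H=G-Y,\ Y\subseteq E(G),
\]
so the natural strategy is a case analysis on the value of $es_f(H)$, together with the observation that deleting an edge set from $H$ and then recombining with $Y$ deletes the same edge set from $G$.

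First I would dispose of the degenerate case. If $es_f(H)=\infty$, the right-hand side is $\infty$ and there is nothing to prove, so I assume $es_f(H)<\infty$. Then by the definition of the $f$-edge stability number there exists an edge set $Z\subseteq E(H)$ with $|Z|=es_f(H)$ and $f(H-Z)\neq f(H)$.

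The key step is to exhibit an edge set of size at most $es_f(H)+|Y|$ that changes $f(G)$, thereby bounding $es_f(G)$ from above. The natural candidate is $W=Y\cup Z$. Since $Z\subseteq E(H)=E(G)\setminus Y$, the sets $Y$ and $Z$ are disjoint, whence $|W|=|Y|+|Z|=|Y|+es_f(H)$. The crucial identity is that deleting $W$ from $G$ is the same as deleting $Z$ from $H$, that is $G-W=(G-Y)-Z=H-Z$. I would then split into two subcases according to whether $f(H-Z)$ equals $f(G)$ or not. If $f(G-W)=f(H-Z)\neq f(G)$, then $W$ is an edge set witnessing a change in $f$, so $es_f(G)\leq|W|=es_f(H)+|Y|$ and we are done. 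Otherwise $f(H-Z)=f(G)$; but by the choice of $Z$ we have $f(H-Z)\neq f(H)$, so $f(H)\neq f(G)$. In this last situation $Y$ itself already changes $f$, since $f(G-Y)=f(H)\neq f(G)$, giving $es_f(G)\leq|Y|\leq es_f(H)+|Y|$.

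I do not expect any serious obstacle here: the only point requiring a little care is confirming that $Y$ and $Z$ are disjoint (so that cardinalities add) and that $G-W=H-Z$ as \emph{edge-deleted} subgraphs on the same vertex set $V(G)$; these are purely set-theoretic facts about edge deletion. Unlike the multiplicative or mining theorems, this lemma needs no hypothesis on the behaviour of $f$ under disjoint union, so no structural assumption on $f$ enters the argument.
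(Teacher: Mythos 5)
Your proof is correct. The paper states this lemma without proof, citing \cite{ref-2}; your argument --- taking a witness $Z\subseteq E(H)$ with $f(H-Z)\neq f(H)$, noting $G-(Y\cup Z)=H-Z$ with $|Y\cup Z|=|Y|+|Z|$, and splitting on whether $f(H-Z)$ (equivalently $f(H)$) equals $f(G)$ so that either $Y\cup Z$ or $Y$ itself witnesses the change --- is exactly the standard argument from the cited source, so the proposal matches the intended proof.
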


Let $E_v$ be the set of edges incident to the vertex $v$, and $E(U,V)$ be the set of edges between the vertex sets $U$ and $V$.

\begin{thm}\label{th-7}
Let $f$ be multiplicative. If $u$ is a vertex of $G$ such that $f(G-u)> f(G)$ and $f(K_1)\geq 1$, or $f(G-u)=f(G)\neq0$ and $f(K_1)\neq1$, or $f(G-u)< f(G)$ and $f(K_1)\leq1$, then $es_f(G)\leq d(u)$.
\end{thm}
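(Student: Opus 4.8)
The plan is to exhibit a single edge set of size $d(u)$ whose deletion changes the value of $f$, which then yields $es_f(G) \leq d(u)$ directly from the definition of the $f$-edge stability number. The natural choice is $Y = E_u$, the set of all edges incident to $u$, so that $|Y| = d(u)$. Deleting $Y$ leaves $u$ as an isolated vertex while keeping the rest of the graph intact, so that $G - Y = (G-u) \sqcup K_1$. Since $f$ is multiplicative, this gives $f(G - Y) = f(G-u) \cdot f(K_1)$, and it remains only to verify that $f(G-u) \cdot f(K_1) \neq f(G)$ under each of the three hypotheses. The isolated-vertex factor $K_1$ is precisely what allows the hypotheses on $f(K_1)$ to enter the comparison, so recognizing $G - E_u$ as this disjoint union is the crux of the argument.

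Then I would dispose of the three cases by a short comparison, using that $f$ takes nonnegative values. In the first case, $f(G-u) > f(G) \geq 0$ forces $f(G-u) > 0$, and multiplying by $f(K_1) \geq 1$ gives $f(G-Y) = f(G-u)\, f(K_1) \geq f(G-u) > f(G)$, so the two values differ. In the third case, $0 \leq f(G-u) < f(G)$ together with $f(K_1) \leq 1$ gives $f(G-Y) = f(G-u)\, f(K_1) \leq f(G-u) < f(G)$, again a strict discrepancy. In the middle case, $f(G-u) = f(G) \neq 0$ reduces the question to $f(G)\, f(K_1) \neq f(G)$, which holds precisely because $f(G) \neq 0$ and $f(K_1) \neq 1$.

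Having established $f(G - Y) \neq f(G)$ in every case with $|Y| = d(u)$, the definition of $es_f$ immediately yields $es_f(G) \leq d(u)$. I do not expect any genuine obstacle: the only point to watch is the degenerate possibility $d(u) = 0$, but when $u$ is isolated one has $f(G) = f(G-u)\, f(K_1)$, and a direct check shows this identity is incompatible with each of the three hypotheses, so that situation is vacuous and the inequality is never forced to assert $es_f(G) \leq 0$. The essential idea is thus the factorization $G - E_u = (G-u) \sqcup K_1$, after which multiplicativity converts the three sign-and-threshold hypotheses into three one-line inequalities.
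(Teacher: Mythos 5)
Your proposal is correct and follows essentially the same route as the paper: delete $Y=E_u$, observe $G-E_u=(G-u)\sqcup K_1$, apply multiplicativity to get $f(G-E_u)=f(G-u)\cdot f(K_1)$, and check that each of the three hypotheses forces $f(G-E_u)\neq f(G)$. Your writeup is in fact more careful than the paper's, which compresses the three-case comparison into one sentence and does not remark on the (vacuous) isolated-vertex case $d(u)=0$ that you correctly dispose of.
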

\begin{proof}
Since $f$ is multiplicative, we have $$f(G-E_u)=f((G-u)\sqcup K_1)=f(G-u)\cdot f(K_1).$$ If the conditions listed in the theorem are held, then $f(G-E_u)>f(G)$ or $f(G-E_u)<f(G)$, and therefore $es_f(G)\leq {|E_u|=d(u)}$.
\end{proof}
\begin{lemma}[\cite{ref-2}]\label{le-3}
Let $G$ be a graph and $H$ be a spanning subgraph of $G$. If $es_f(H)=1$, then $es_f(G)\leq {1+|E(G)|-|E(H)|}$.
\end{lemma}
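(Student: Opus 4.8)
The plan is to reduce the statement directly to Lemma~\ref{le-2}, the edge analogue of the subadditivity bound. The key observation is that a spanning subgraph $H$ of $G$ has the same vertex set as $G$ and an edge set $E(H)\subseteq E(G)$; hence $H$ is obtained from $G$ by deleting \emph{edges only}. Concretely, I would set $Y=E(G)\setminus E(H)$, so that $G-Y=H$ and $|Y|=|E(G)|-|E(H)|$. This places us exactly in the hypothesis of Lemma~\ref{le-2}.

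First I would record that $Y\subseteq E(G)$ and $H=G-Y$, which is legitimate precisely because $H$ is spanning, so that no vertex deletion is involved and the edge-stability framework applies without modification. Next, I would apply Lemma~\ref{le-2} with this choice of $Y$ to obtain
\begin{align*}
es_f(G)\leq es_f(H)+|Y|.
\end{align*}
Finally, substituting the hypothesis $es_f(H)=1$ together with $|Y|=|E(G)|-|E(H)|$ yields
\begin{align*}
es_f(G)\leq 1+|E(G)|-|E(H)|,
\end{align*}
as claimed.

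Since the argument is a one-step specialization of Lemma~\ref{le-2}, there is essentially no technical obstacle to overcome. The only point that deserves care is the bookkeeping of \emph{why} $H=G-Y$ holds with $Y$ an edge set: this is exactly the role of the spanning hypothesis, and it is worth stating explicitly so that the reduction to Lemma~\ref{le-2} is unambiguous. The hypothesis $es_f(H)=1$ guarantees a single edge $e\in E(H)$ with $f(H-e)\neq f(H)$, so the bound is in fact witnessed concretely by deleting the $|Y|$ missing edges of $G$ together with that one edge $e$; no further structural properties of $f$, such as multiplicativity or the mining property, are needed for this estimate.
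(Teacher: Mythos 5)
Your proof is correct, and there is in fact no in-paper proof to compare it against: the paper states Lemma~\ref{le-3} as a quoted result from \cite{ref-2} without proof. Your reduction is exactly the natural one-step argument: since $H$ is spanning, $Y=E(G)\setminus E(H)$ satisfies $H=G-Y$ and $|Y|=|E(G)|-|E(H)|$, and Lemma~\ref{le-2} together with $es_f(H)=1$ gives the bound immediately. One small caveat concerns your closing remark that the bound is ``witnessed concretely'' by deleting $Y\cup\{e\}$: from $f(H-e)\neq f(H)$ you may only conclude $f(G-(Y\cup\{e\}))\neq f(G)$ when $f(H)=f(G)$; if instead $f(H)\neq f(G)$, the set $Y\cup\{e\}$ may fail to be a witness, but then $Y$ alone already is one, since $f(G-Y)=f(H)\neq f(G)$, and the bound holds with room to spare. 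This case distinction is precisely what the proof of Lemma~\ref{le-2} absorbs internally, so your main argument, which invokes that lemma as a black box, is unaffected; only the parenthetical claim about the explicit witness set should be stated with the case split.
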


\begin{thm}\label{th-8}
 Let $f$ be multiplicative. If $f(K_2)\neq f(2K_1)$, then $es_f(G)\leq \min\{d(x)+d(y)-1: xy\in E(G)\text{ and }f(G-\{x,y\})\neq0\}$.
\end{thm}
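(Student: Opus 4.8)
The plan is to fix an edge $xy \in E(G)$ with $f(G - \{x,y\}) \neq 0$ and to produce a spanning subgraph $H$ of $G$ with $es_f(H) = 1$ to which Lemma~\ref{le-3} applies. The natural candidate is obtained by deleting from $G$ every edge incident to $x$ or to $y$ except the edge $xy$ itself; that is, $H = G - (E_x \cup E_y \setminus \{xy\})$. Since $E_x \cap E_y = \{xy\}$, the number of deleted edges is $|E_x \cup E_y| - 1 = d(x) + d(y) - 2$, so that $|E(G)| - |E(H)| = d(x) + d(y) - 2$. Note that $H$ is a genuine spanning subgraph, as only edges (not vertices) have been removed.

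In $H$ the vertices $x$ and $y$ are incident only to each other, so $\{x,y\}$ together with the surviving edge $xy$ splits off as a $K_2$ component, giving $H = K_2 \sqcup (G - \{x,y\})$. Deleting the single edge $xy$ from $H$ then yields $H - xy = 2K_1 \sqcup (G - \{x,y\})$. Using the multiplicativity of $f$, I would compute $f(H) = f(K_2) \cdot f(G - \{x,y\})$ and $f(H - xy) = f(2K_1) \cdot f(G - \{x,y\})$. Since $f(K_2) \neq f(2K_1)$ by hypothesis and $f(G - \{x,y\}) \neq 0$ by the choice of the edge, these two values differ, so a single edge deletion changes the value of $f$; hence $es_f(H) \leq 1$. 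Because the empty deletion never alters $f$, one always has $es_f \geq 1$ when it is finite, so in fact $es_f(H) = 1$.

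Now Lemma~\ref{le-3}, applied to the spanning subgraph $H$ with $es_f(H) = 1$, gives $es_f(G) \leq 1 + |E(G)| - |E(H)| = 1 + (d(x) + d(y) - 2) = d(x) + d(y) - 1$. Since this bound holds for every edge $xy$ satisfying $f(G - \{x,y\}) \neq 0$, taking the minimum over all such edges yields the claimed inequality.

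The argument is essentially a reduction, and I do not anticipate a serious obstacle. The one point that requires genuine care is verifying that $H$ really is the disjoint union $K_2 \sqcup (G - \{x,y\})$, i.e.\ that deleting all edges at $x$ and $y$ other than $xy$ truly isolates the $K_2$ on $\{x,y\}$ from the remaining graph; this is where the double-counting of the edge $xy$ in $E_x \cup E_y$ is what produces the $-1$ in the final bound. The hypotheses $f(K_2) \neq f(2K_1)$ and $f(G-\{x,y\}) \neq 0$ are exactly what is needed to guarantee $es_f(H) = 1$, and their role should be stated explicitly so the reader sees why both nonvanishing and the separation of $K_2$ from $2K_1$ are indispensable.
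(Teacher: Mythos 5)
Your proof is correct and matches the paper's argument essentially verbatim: the paper also sets $H = G - E_x - E_y + xy = (G-\{x,y\}) \sqcup K_2$, uses multiplicativity with $f(G-\{x,y\}) \neq 0$ to show deleting $xy$ changes $f(H)$, concludes $es_f(H)=1$, and applies Lemma~\ref{le-3}. Your write-up is in fact slightly more careful than the paper's, since you make the edge count $|E(G)|-|E(H)| = d(x)+d(y)-2$ explicit rather than leaving it implicit.
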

\begin{proof}
Let $H={G-E_x-E_y+xy}$ for $xy\in E(G)$, then $H={(G-\{x,y\})\sqcup K_2}$. Since $f$ is multiplicative and $f(K_2)\neq f(2K_1)$, we have
\begin{align*}
f(H)&=f(G-\{x,y\}) \cdot f(K_2)\\
&\neq f(G-\{x,y\}) \cdot f(2K_1)\\
&=f(H-xy)
\end{align*}
for $f(G-\{x,y\})\neq0$. This implies $es_f(H)=1$, and the result follows by Lemma \ref{le-3}.
\end{proof}
\begin{thm}\label{th-9}
Let $f$ be multiplicative and $H$ be a subgraph of $G$. Then $es_f(G)\leq es_f(H)+|E(V(H),V(G)\backslash V(H))|$ if $f(G-V(H))\neq0$.
\end{thm}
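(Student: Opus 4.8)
The plan is to mirror the proof of Theorem~\ref{th-1}, replacing the vertex cut $N(V(H))\setminus V(H)$ by the corresponding edge cut and invoking Lemma~\ref{le-2} in place of Lemma~\ref{le-1}. Write $H'=G-V(H)$ and let $Y=E(V(H),V(G)\setminus V(H))$ be the set of edges of $G$ with one endpoint in $V(H)$ and the other outside, so that $|Y|=|E(V(H),V(G)\setminus V(H))|$. By Lemma~\ref{le-2} we have $es_f(G)\leq es_f(G-Y)+|Y|$, so the whole statement reduces to the single inequality $es_f(G-Y)\leq es_f(H)$.

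Next I would pin down the structure of $G-Y$. Deleting every edge of $Y$ destroys all adjacencies between $V(H)$ and $V(G)\setminus V(H)$, so in $G-Y$ these two vertex classes lie in different components; consequently $G-Y=H\sqcup H'$, the left factor being the part carried by $V(H)$ (here read as the subgraph induced on $V(H)$, exactly as in Theorem~\ref{th-1}) and $H'=G-V(H)$ the induced subgraph on the complement. The hypothesis $f(G-V(H))\neq0$ is precisely the assertion $f(H')\neq0$, which is the nonvanishing we will need when we multiply.

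Now I would run the multiplicativity argument just as in Theorems~\ref{th-1} and~\ref{th-3}. If $es_f(H)=\infty$ there is nothing to prove, so assume $es_f(H)<\infty$ and pick $S\subseteq E(H)$ with $|S|=es_f(H)$ and $f(H-S)\neq f(H)$. Since $f$ is multiplicative and $f(H')\neq0$,
\begin{align*}
f(G-Y)&=f(H\sqcup H')=f(H)\cdot f(H')\\
&\neq f(H-S)\cdot f(H')=f((H-S)\sqcup H')=f((G-Y)-S),
\end{align*}
so that $es_f(G-Y)\leq|S|=es_f(H)$. Combining this with $es_f(G)\leq es_f(G-Y)+|Y|$ from the first paragraph yields $es_f(G)\leq es_f(H)+|E(V(H),V(G)\setminus V(H))|$, as claimed.

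I expect the only genuine obstacle to be the role of the hypothesis $f(G-V(H))\neq0$ in the displayed chain: the passage from $f(H)\cdot f(H')$ to $f(H-S)\cdot f(H')$ preserves the inequality only because $f(H')\neq0$ allows the common factor to be cancelled, and without it both products could vanish and the bound would collapse. The structural claim $G-Y=H\sqcup H'$ is the other point to state with care, since it is exactly what lets multiplicativity split $f(G-Y)$ into the factor $f(H)$ that the choice of $S$ controls and the factor $f(H')$ that we need only to be nonzero.
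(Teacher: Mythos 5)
Your proof is essentially the paper's own: the paper sets $T=(G-V(H))\sqcup H$, applies Lemma~\ref{le-2} with the same edge cut $E(V(H),V(G)\setminus V(H))$, and runs the identical multiplicativity argument, using $f(G-V(H))\neq 0$ to distinguish $f(H)\cdot f(H')$ from $f(H-S)\cdot f(H')$. Your explicit observation that $G-Y=H\sqcup H'$ requires reading $H$ as the subgraph induced on $V(H)$ makes precise a point the paper leaves implicit, but the route is the same.
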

\begin{proof}
Let $T=(G-V(H))\sqcup H$, then $es_f(G)\leq es_f(T)+|E(V(H),V(G)\backslash V(H))|$ by Lemma \ref{le-2}.

If $es_f(H)=\infty$, then the result is obvious. If $es_f(H)<\infty$, then there exists an edge subset $Y$ of $E(H)$ such that $|Y|=es_f(H)$ and $f(H)\neq f(H-Y)$. Since $f$ is multiplicative, we have
\begin{align*}
f(T)&=f(G-V(H)) \cdot f(H) \\
&\neq f(G-V(H)) \cdot f(H-Y)\\
&= f((G-V(H))\cup (H-Y))\\
&=f(T-Y)
\end{align*}
for $f(G-V(H))\neq0$. Thus $es_f(T)\leq |Y|=es_f(H)$, and therefore the result follows.
\end{proof}
\begin{thm}\label{th-10}
Let $f$ be mining and $H$ be a subgraph of $G$. If $f(H)< f(G-V(H))$, then $es_f(G)\leq es_f(H)+|E(V(H),V(G)\backslash V(H))|$.
\end{thm}
\begin{proof}
Let $T=(G-V(H))\sqcup H$, then $$es_f(G)\leq es_f(T)+|E(V(H),V(G)\backslash V(H))|.$$

If $es_f(H)=\infty$, then the result is obvious. If $es_f(H)<\infty$, then there exists an edge subset $Y$ of $E(H)$ such that $|Y|=es_f(H)$ and $f(H)\neq f(H-Y)$. Since $f$ is mining and $f(H)< f(G-V(H))$, we have
\begin{align*}
f(T)&=\min\{f(G-V(H)), f(H)\}=f(H)\\
& \neq \min\{f(G-V(H)), f(H-Y)\}=f(T-Y).
\end{align*}
Thus $es_f(T)\leq |Y|=es_f(H)$, and  therefore the result follows.
\end{proof}

The above results show the influence of spanning subgraphs and induced subgraphs on the bound of $es_f(G)$. Note that $es_f(G)\leq es_f(H)+ |E(G)|-|E(H)|$ for a spanning subgraph $H$ of $G$, and $es_f(G)\leq es_f(H)+|E(V(H),V(G)\backslash V(H))|$ for a subgraph $H$ of $G$ with multiplicative $f$ and $f(G-V(H))\neq0$ or mining $f$ and $f(H)< f(G-V(H))$.

In the following, we generalize the above results to disjoint union of arbitrarily many graphs.

\begin{thm}\label{th-11}
Let $f$ be multiplicative, $G=H_1\sqcup \cdots \sqcup H_k$, $f(G)\neq0$, and $es_f(H_i)<\infty$ for $i=I$. Then
\begin{align*}
es_f(G)=\begin{cases}\infty,&I=\emptyset;\\ \min \{es_f(H_i): i\in I\},&otherwise.\end{cases}
\end{align*}
\end{thm}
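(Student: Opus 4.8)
The plan is to mirror the vertex-case argument of Theorem~\ref{th-4}, exploiting the fact that deleting edges never merges or separates components: any $Y\subseteq E(G)$ splits as $Y=\bigcup_{i=1}^{k}Y_i$ with $Y_i=Y\cap E(H_i)$, and then $G-Y=(H_1-Y_1)\sqcup\cdots\sqcup(H_k-Y_k)$, so multiplicativity gives $f(G-Y)=\prod_{i=1}^{k}f(H_i-Y_i)$. First I would record the standing observation that $f(G)=\prod_{i}f(H_i)\neq 0$ forces $f(H_i)\neq 0$ for every $i$; this nonvanishing of all factors is what will let me cancel cofactors throughout.

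For the upper bound I would fix any $i\in I$ and choose $Y_i\subseteq E(H_i)$ with $|Y_i|=es_f(H_i)$ and $f(H_i-Y_i)\neq f(H_i)$. Viewing $Y_i$ as a subset of $E(G)$, the product formula yields $f(G-Y_i)=f(H_i-Y_i)\cdot\prod_{j\neq i}f(H_j)$, and since the cofactor $\prod_{j\neq i}f(H_j)$ is nonzero, the inequality $f(H_i-Y_i)\neq f(H_i)$ propagates to $f(G-Y_i)\neq f(G)$. Hence $es_f(G)\leq |Y_i|=es_f(H_i)$ for each $i\in I$, so $es_f(G)\leq\min\{es_f(H_i):i\in I\}$.

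For the lower bound I would take any $Y\subseteq E(G)$ with $|Y|<\min\{es_f(H_i):i\in I\}$ and argue that $f(H_i-Y_i)=f(H_i)$ for every $i$. When $i\notin I$ this holds because $es_f(H_i)=\infty$ means $f$ is unchanged by deleting any edge set of $H_i$; when $i\in I$, the bound $|Y_i|\leq |Y|<es_f(H_i)$ together with the minimality in the definition of $es_f(H_i)$ forces $f(H_i-Y_i)=f(H_i)$. The product formula then gives $f(G-Y)=\prod_{i}f(H_i-Y_i)=\prod_{i}f(H_i)=f(G)$, so no edge set smaller than the claimed minimum can change $f$, that is, $es_f(G)\geq\min\{es_f(H_i):i\in I\}$. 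Combining the two bounds settles the case $I\neq\emptyset$.

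Finally, the case $I=\emptyset$ is just the lower-bound computation with the minimum over the empty set read as $\infty$: every component satisfies $es_f(H_i)=\infty$, so $f(G-Y)=\prod_{i}f(H_i-Y_i)=f(G)$ holds for all $Y\subseteq E(G)$, giving $es_f(G)=\infty$. The only delicate point is the cancellation step, deducing $f(G-Y_i)\neq f(G)$ from $f(H_i-Y_i)\neq f(H_i)$, which is exactly where the hypothesis $f(G)\neq 0$ (hence every cofactor $\prod_{j\neq i}f(H_j)\neq 0$) is essential; without it the inequality could collapse.
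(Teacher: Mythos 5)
Your proposal is correct and follows essentially the same route as the paper's proof: the same componentwise decomposition $G-Y=(H_1-Y_1)\sqcup\cdots\sqcup(H_k-Y_k)$, the same multiplicative cancellation for the upper bound, and the same minimality argument for the lower bound. If anything, you are slightly more careful than the paper, since you make explicit that $f(G)\neq 0$ forces every cofactor $\prod_{j\neq i}f(H_j)\neq 0$ (the step the paper invokes only implicitly) and you handle the case $I=\emptyset$ as the empty-minimum instance of the lower-bound computation rather than as a bare assertion.
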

\begin{proof}
If $I=\emptyset$, then $es_f(H_j)=\infty$ for all $j=1, \cdots, k$, and therefore $es_f(G)=\infty$.

Let $I\neq \emptyset$. Then there exists an edge subset $Y$ of $E(H_i)$ such that $|Y|=\min\{es_f(H_i): i\in I\}$ and $f(H_i-Y) \neq f(H_i)$ for some $i\in I$. Since $f$ is multiplicative and $f(G)\neq0$, we have
\begin{align*}
f(G-Y)&=f(H_1) \cdot f(H_2) \cdots f(H_i-Y) \cdots f(H_k)\\
&\neq f(H_1) \cdot f(H_2) \cdots f(H_i) \cdots f(H_k)=f(G),
\end{align*}
and therefore $es_f(G)\leq |Y|=\min\{es_f(H_i): i\in I\}$.

In the following, we prove $es_f(G)\geq |Y|$. Let $M$ be an edge subset of $E(G)$ with $|M|<|Y|$. By the selection of $Y$, we obtain $f(H_i-M)=f(H_i)$ for $i=1, \cdots ,k$. Thus $f(G-M)=f(G)$, then the result follows.
\end{proof}
\begin{thm}\label{th-12}
Let $f$ be mining and monotone decreasing, $G=H_1\sqcup \cdots \sqcup H_k$, $f(H_i)=f(G)$ for $i\in I$, and $es_f(H_j)=\infty$ for $j\in J$. Then
\begin{align*}
es_f(G)=\begin{cases}\infty,&I\cap J\neq\emptyset;\\ \sum_{i\in I} es_f(H_i),&otherwise.\end{cases}
\end{align*}
\end{thm}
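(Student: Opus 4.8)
The plan is to exploit the fact that, since $f$ is mining, $f(G)=\min\{f(H_1),\dots,f(H_k)\}$, so $I$ is precisely the (nonempty) set of components attaining this minimum. For any edge set $Y\subseteq E(G)$ I would write $Y_i=Y\cap E(H_i)$, so that $G-Y=\bigsqcup_{i=1}^{k}(H_i-Y_i)$ and $f(G-Y)=\min_i f(H_i-Y_i)$. Because $f$ is monotone decreasing, each edge deletion can only raise the value of $f$, that is $f(H_i-Y_i)\geq f(H_i)$ and $f(G-Y)\geq f(G)$; hence altering $f(G)$ by edge deletion is only ever possible in the upward direction, and $f(G-Y)\neq f(G)$ is equivalent to $f(G-Y)>f(G)$. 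This single observation drives both cases.

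For the first case $I\cap J\neq\emptyset$, I would fix an index $i_0\in I\cap J$. Then $es_f(H_{i_0})=\infty$ forces $f(H_{i_0}-Y_{i_0})=f(H_{i_0})=f(G)$ for every $Y$, so the mining property gives $f(G-Y)\leq f(H_{i_0}-Y_{i_0})=f(G)$, while monotonicity gives $f(G-Y)\geq f(G)$. Thus $f(G-Y)=f(G)$ for all $Y\subseteq E(G)$, and $es_f(G)=\infty$.

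For the remaining case $I\cap J=\emptyset$ I would prove the two inequalities separately. For the upper bound, for each $i\in I$ (now with $es_f(H_i)<\infty$) choose an optimal $Y_i\subseteq E(H_i)$ with $|Y_i|=es_f(H_i)$ and $f(H_i-Y_i)\neq f(H_i)$; monotonicity upgrades this to $f(H_i-Y_i)>f(G)$. Setting $Y=\bigcup_{i\in I}Y_i$, which is a disjoint union because the edge sets $E(H_i)$ are disjoint, every surviving component value --- namely $f(H_i-Y_i)$ for $i\in I$ and $f(H_t)>f(G)$ for $t\notin I$ --- strictly exceeds $f(G)$, so $f(G-Y)>f(G)$ and $es_f(G)\leq|Y|=\sum_{i\in I}es_f(H_i)$. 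For the lower bound, take any $T$ with $|T|<\sum_{i\in I}es_f(H_i)$; since $\sum_{i\in I}|T_i|\leq|T|$, a pigeonhole argument yields some $i_0\in I$ with $|T_{i_0}|<es_f(H_{i_0})$, whence $f(H_{i_0}-T_{i_0})=f(H_{i_0})=f(G)$. Mining then gives $f(G-T)\leq f(G)$ and monotonicity gives $f(G-T)\geq f(G)$, so $f(G-T)=f(G)$; thus no edge set smaller than $\sum_{i\in I}es_f(H_i)$ can change the value of $f$, and $es_f(G)\geq\sum_{i\in I}es_f(H_i)$. Combining the two bounds finishes this case.

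The one point that requires care --- and the main obstacle --- is the use of monotone decreasing on edge-deleted (spanning) subgraphs: the definition is phrased for induced subgraphs, so I would either invoke the natural reading that removing edges cannot decrease $f$, or argue that the inequalities $f(H_i-Y_i)\geq f(H_i)$ and $f(G-T)\geq f(G)$ follow in exactly the same manner as in the analogous vertex statement, Theorem~\ref{th-118}. Everything else is pigeonhole bookkeeping together with the observation that the optimal edge sets $Y_i$ live in disjoint components, which makes their sizes add and keeps both bounds tight.
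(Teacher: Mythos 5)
Your proposal is correct and follows essentially the same route as the paper's proof: the same case split on $I\cap J$, the same upper-bound construction $Y=\bigcup_{i\in I}Y_i$ from optimal edge sets in disjoint components, and the same pigeonhole argument for the lower bound. The one point you flag---that monotonicity is defined via induced subgraphs but must be applied to edge-deleted (spanning) subgraphs---is a genuine gap in the paper's own definitions, which the paper glosses over silently (its Lemma~\ref{le-4} even speaks of ``monotone with respect to spanning subgraphs''), so your explicit treatment of it is if anything more careful than the original.
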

\begin{proof}
Let $j\in I\cap J$ and $Y$ be an arbitrary edge subset of $E(G)$. Since $f$ is mining and monotone decreasing,
\begin{align*}
f(G-Y)&=f((H_1-Y)\sqcup \cdots\sqcup (H_j-Y)\sqcup\cdots \sqcup (H_k-Y))\\&=\min\{f(H_1-Y), \cdots, f(H_j-Y), \cdots, f(H_k-Y)\}\\
&= f(H_j-Y)=f(H_j)=f(G).
\end{align*}
Thus $es_f(G)=\infty$.

Now we assume that $I\cap J=\emptyset$. Since $f$ is mining, $I\neq\emptyset$. Then there there exists an edge subset $Y_i$ of $E(H_i)$ such that $|Y_i|=es_f(H_i)$ and $f(H_i-Y_i)\neq f(H_i)$ for $i\in I$. Set $Y=Y_1\cup \cdots Y_t$, then $|Y|=\sum_{i\in I}es_f(H_i)$. Since $f$ is mining, we have
\begin{align*}
f(G-Y)&= f(\bigsqcup_{i\in I}(H_i-Y_i)\sqcup\bigsqcup_{t\in \{1,\cdots,k\}\setminus I}H_t\\
&= \min \{f(H_i-Y_i), f(H_t):i\in I\text{ and }t\in \{1,\cdots,k\}\setminus I\}\\
&\neq \min\{f(H_i):i=1,\cdots,k\}=f(G),
\end{align*}
 and therefore $es_f(G)\leq |Y|$.

In the following, we prove $es_f(G)\geq |Y|$. Let $T$ be an edge subset of $E(G)$ with $|T|<|Y|$. By the definition of $Y$, we have $f(H_i-T)=f(H_i)$ for some $i\in I$. Since again $f$ is monotone decreasing, we have
\begin{align*}
f(G-T)&=f((H_1-T)\sqcup \cdots \sqcup (H_i-T)\sqcup \cdots \sqcup (H_k-T))\\
&= \min \{f(H_1-T), \cdots , f(H_i-T), \cdots, f(H_k-T)\}\\
&=f(H_i-T)=f(H_i)=f(G),
\end{align*}
and therefore the result follows.
\end{proof}
\begin{thm}\label{th-116}
Let $f$ be a mining and monotone increasing invariant, $G=H_1\sqcup \cdots \sqcup H_k$, $f(H_i)=f(G)$ for $i\in I$, and $es_f(H_j)=\infty$ for $j\in J$. Then
\begin{align*}
es_f(G)=\begin{cases}\infty,&J=\{1, \cdots, k\};\\ \min\{|Y_t|: t\notin I\text{ and }f(H_t-Y_t)<f(G) \}, &I=J;\\ \min\{es_f(H_i),|Y_t|:i\in I\setminus J\text{ and }\\\hskip2cm t\notin I\cup J\text{ and }f(H_t-Y_t)<f(G)\},&otherwise.\end{cases}
\end{align*}
\end{thm}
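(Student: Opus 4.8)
The plan is to reduce the statement to a per-component cost computation, exactly as in the proof of Theorem~\ref{th-6}. Since $f$ is mining, $f(G)=\min\{f(H_1),\dots,f(H_k)\}$, so $I=\{i:f(H_i)=f(G)\}$ is nonempty and records precisely the components attaining the minimum. For any $Y\subseteq E(G)$ write $Y_i=Y\cap E(H_i)$, so that $G-Y=\bigsqcup_{i=1}^{k}(H_i-Y_i)$ and, by the mining property, $f(G-Y)=\min_i f(H_i-Y_i)$. The decisive observation is that the monotone increasing property forces $f(H_i-Y_i)\le f(H_i)$ for every $i$; hence $f(G-Y)\le f(G)$ always, and the value $f(G)$ can never be raised by deleting edges. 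Consequently $f(G-Y)\neq f(G)$ holds if and only if $f(G-Y)<f(G)$, i.e. if and only if $f(H_i-Y_i)<f(G)$ for some $i$. Thus $es_f(G)$ equals the minimum, over all components, of the least number of edges one must remove from $H_i$ to push its value strictly below $f(G)$.

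First I would establish the upper bounds by exhibiting witnesses. For $i\in I\setminus J$ take a minimum edge set $Y$ of $H_i$ with $f(H_i-Y)\neq f(H_i)$, so $|Y|=es_f(H_i)$; since $f(H_i)=f(G)$ and $f$ is monotone increasing, this forces $f(H_i-Y)<f(G)$, whence $f(G-Y)<f(G)$ and $es_f(G)\le es_f(H_i)$. For $t\notin I\cup J$ for which some $Y_t$ with $f(H_t-Y_t)<f(G)$ exists, deleting $Y_t$ gives $f(G-Y_t)\le f(H_t-Y_t)<f(G)$, so $es_f(G)\le|Y_t|$. Taking the minimum over all such witnesses yields the ``$\le$'' direction in the third case; specialising to $I=J$ (where $I\setminus J=\emptyset$ and $\{1,\dots,k\}\setminus(I\cup J)=\{1,\dots,k\}\setminus I$) gives the second case, and when $J=\{1,\dots,k\}$ no witness exists, which will give $es_f(G)=\infty$.

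For the reverse inequality I would take an arbitrary edge set $M$ with $|M|$ strictly below the claimed value and show $f(G-M)=f(G)$, arguing component by component: a component with $i\in I\cap J$ is frozen ($es_f(H_i)=\infty$) so $f(H_i-M_i)=f(H_i)=f(G)$; a component with $i\in I\setminus J$ is changed only by removing at least $es_f(H_i)$ edges, so $f(H_i-M_i)=f(G)$; a component with $t\in J\setminus I$ is frozen above the minimum, so $f(H_t-M_t)=f(H_t)>f(G)$; and a component with $t\notin I\cup J$ needs at least $|Y_t|$ deletions to drop below $f(G)$, so $f(H_t-M_t)\ge f(G)$. In every case $f(H_i-M_i)\ge f(G)$ with equality on $I$, so $\min_i f(H_i-M_i)=f(G)$, i.e. $f(G-M)=f(G)$. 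The main obstacle is the index bookkeeping: one must keep separate the two distinct notions of ``cost'' --- for a component at the minimum ($i\in I$) any change already drops its value below $f(G)$, so the cost is exactly $es_f(H_i)$, whereas for a component above the minimum ($t\notin I$) one must delete enough edges to cross strictly below $f(G)$, a genuinely larger threshold $|Y_t|$ --- and correctly recognise that the frozen sets $I\cap J$ and $J\setminus I$ contribute nothing. The lower bound relies on minimality of cardinality (edge sets smaller than $es_f(H_i)$ respectively $|Y_t|$ cannot achieve the defining inequalities), not on any monotonicity of $f$ in the number of deleted edges.
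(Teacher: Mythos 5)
Your proposal is correct and follows essentially the same route as the paper's proof: the mining property reduces $f(G-Y)$ to $\min_i f(H_i-(Y\cap E(H_i)))$, monotone increasing turns any change into a strict drop below $f(G)$, explicit witnesses ($Y_i$ with $|Y_i|=es_f(H_i)$ for $i\in I\setminus J$, and the sets $Y_t$ for $t\notin I\cup J$) give the upper bounds, and minimality of $es_f(H_i)$ and of the thresholds $|Y_t|$ gives the lower bound. If anything, you are more complete than the paper, which leaves the final lower bound as ``easy to prove'' and whose third-case argument tacitly assumes $I\setminus J\neq\emptyset$; your unified per-component threshold reduction handles the subcase $I\subsetneq J$ and the frozen components $I\cap J$, $J\setminus I$ uniformly.
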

\begin{proof}
If $vs_f(H_i)=\infty$ for $i=1, \cdots, k$, then $$f(G-Y)=\min\{f(H_1-Y), \cdots, f(H_k-Y)\}=f(H_1-Y)=f(G)$$ for all $Y\subseteq E(G)$, and therefore $vs_f(G)=\infty$.

Let $I=J\neq\{1, \cdots, k\}$. Then, by the definition of $f$-edge stability number, $es_f(G)\leq \min\{|Y_t|: t\notin I\text{ and }f(H_t-Y_t)<f(G) \}$.

If $Y\subset E(G)$ such that $|Y|<\min\{|Y_t|: t\notin I\text{ and }f(H_t-Y_t)<f(G) \}$, then $f(H_t-Y)\geq f(G)$ for all $t\in \{1, \cdots, k\}\setminus I$. Therefore
\begin{align*}
f(G-Y)&=f(H_i-Y)=f(H_i)=f(G),
\end{align*}
where $i\in I$. Thus $es_f(G)\geq \min\{|Y_t|: t\notin I\text{ and }f(H_t-Y_t)<f(G) \}$.

Let $I\neq J$ and $i\in I\setminus J$. Then there exists an edge subset $Y_i$ of $E(H_i)$ with $|Y_i|=es_f(H_i)$ and $f(H_i-Y_i)\neq f(H_i)$. Since $f$ is monotone increasing and $f(H_i)=f(G)$,
\begin{align*}
f(G-Y_i)&=f(H_1\sqcup\cdots\sqcup(H_i-Y_i)\cdots\sqcup H_k)\\
&= \min\{f(H_1), \cdots,f(H_i-Y_i), \cdots, f(H_k)\}\\
&= f(H_i-Y_i)<f(H_i)=f(G).
\end{align*}
Thus $es_f(G)\leq\min\{es_f(H_i): i\in I\setminus J\}$. By the definition of $f$-edge stability number, $es_f(G)\leq \min\{|Y_t|: t\in\{1,\cdots,k\}\setminus(I\cup J)\text{ and }f(H_t-Y_t)<f(G) \}$. It is easy to prove that $$es_f(G)\geq\min\{es_f(H_i),|Y_t|:i\in I\setminus J,\text{ and }t\notin I\cup J\text{ and }f(H_t-Y_t)<f(G)\}.$$
\end{proof}
\begin{definition}
An edge subset $E$ of $G$ is covering if $E$ covers the edges of all nonempty spanning subgraphs $H$ with $f(H)=f(G)$. $\beta_f^{'}(G):=\min\{|E|: E \text{ is a covering set of } G\}$.
\end{definition}
\begin{lemma}[\cite{ref-11}]\label{le-4}
Let $f$ be monotone with respect to spanning subgraphs. If $es_f(G)< \infty$, then $es_f(G)=\beta_f^{'}(G)$.
\end{lemma}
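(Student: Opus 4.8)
The plan is to establish the two inequalities $\beta_f'(G)\le es_f(G)$ and $es_f(G)\le\beta_f'(G)$ separately. The guiding observation is that for any $Y\subseteq E(G)$ the graph $G-Y$ is a spanning subgraph of $G$, so deciding whether $f(G-Y)=f(G)$ is exactly the question of whether $G-Y$ lies in the family $\mathcal{S}=\{H:\ H \text{ a nonempty spanning subgraph of } G,\ f(H)=f(G)\}$ of stable subgraphs that the covering sets are designed to meet.

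For $\beta_f'(G)\le es_f(G)$ I would use finiteness to pick $Y\subseteq E(G)$ with $|Y|=es_f(G)$ and $f(G-Y)\neq f(G)$, and then show that this $Y$ is a covering set. The key step is a contradiction argument: given any $H\in\mathcal{S}$, if $Y\cap E(H)=\emptyset$ then $E(H)\subseteq E(G)\setminus Y=E(G-Y)$, so $H$ is a spanning subgraph of $G-Y$, which is in turn a spanning subgraph of $G$. Monotonicity with respect to spanning subgraphs then traps $f(G-Y)$ between $f(H)=f(G)$ and $f(G)$ — in the increasing case $f(G)=f(H)\le f(G-Y)\le f(G)$, and symmetrically in the decreasing case — forcing $f(G-Y)=f(G)$ and contradicting the choice of $Y$. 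Hence $Y$ meets every member of $\mathcal{S}$, so it is covering and $\beta_f'(G)\le|Y|=es_f(G)$.

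For $es_f(G)\le\beta_f'(G)$ I would take a minimum covering set $E$ and show $es_f(G)\le|E|$; here monotonicity is not even needed. If $G-E$ has an edge, then $G-E$ is a nonempty spanning subgraph; were $f(G-E)=f(G)$ it would lie in $\mathcal{S}$ and the covering property would demand $E\cap E(G-E)\neq\emptyset$, impossible since $E(G-E)=E(G)\setminus E$. Thus $f(G-E)\neq f(G)$ and $es_f(G)\le|E|=\beta_f'(G)$. The one delicate point — the main obstacle — is the degenerate case in which $G-E$ is edgeless, i.e. $E=E(G)$, where the covering condition is vacuous (the empty subgraph is excluded) and $f(G-E)$ might equal $f(G)$. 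I expect to resolve this purely via finiteness: $E(G)$ itself is always covering, so $\beta_f'(G)\le|E(G)|$, and since $es_f(G)<\infty$ any witnessing set sits inside $E(G)$, giving $es_f(G)\le|E(G)|=\beta_f'(G)$ in this case as well. Combining the two inequalities yields the equality.
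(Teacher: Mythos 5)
Your argument is correct and complete. One structural point first: the paper does not prove this lemma at all --- it is imported from \cite{ref-11} as a quoted result --- so there is no internal proof to compare yours against; what you have written is a self-contained reconstruction, and it is the natural one. Both directions check out. For $\beta_f^{'}(G)\le es_f(G)$, monotonicity enters exactly where it must: if a minimum witness $Y$ missed some nonempty spanning subgraph $H$ with $f(H)=f(G)$, then the chain $H\subseteq G-Y\subseteq G$ of spanning subgraphs squeezes $f(G-Y)$ to $f(G)$, contradicting the choice of $Y$; and your argument is properly agnostic as to whether $f$ is increasing or decreasing, which the hypothesis requires. For $es_f(G)\le\beta_f^{'}(G)$ you rightly observe that monotonicity is superfluous, and you correctly isolate and dispose of the only delicate case: a minimum covering set $E$ with $G-E$ edgeless forces $E=E(G)$, and then finiteness of $es_f(G)$ supplies a witness $Y\subseteq E(G)$ with $es_f(G)=|Y|\le|E(G)|=\beta_f^{'}(G)$. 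Two confirmations worth recording: the hypothesis $es_f(G)<\infty$ forces $E(G)\neq\emptyset$, so $G$ itself belongs to your family $\mathcal{S}$ and $E(G)$ is genuinely a covering set, a fact your degenerate case tacitly uses; and your reading of ``covers'' as ``intersects the edge set of each such $H$'' (rather than ``contains it'') is the intended one --- it is the only reading under which the paper's application of the lemma in Theorem~\ref{th-13}, where $Y\cap E(H_i)$ is treated as a covering set of $H_i$, makes sense, since the containment reading would force $\beta_f^{'}(G)=|E(G)|$ identically and the lemma would fail.
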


In the following, we use the equation relationship between $es_f(G)$ and $\beta_f^{'}(G)$ in Lemma \ref{le-4} to discuss the lower bound of $es_f(G)$.

\begin{thm}\label{th-13}
 Let $f$ be monotone with respect to spanning subgraphs. If $H_1, \cdots, H_t$ be nonempty spanning subgraphs of $G$ with $f(H_i)=f(G)$ $(i=1, \cdots, t)$ such that $m$ edges appear in at least two $H_i$ and at most of $l\ (l\geq 1)$ $H_i$ are sharing an edge in common, then $es_f(G)\geq \frac{1}{l}\sum_{i=1}^t es_f(H_i)\geq t/l$ and $es_f(G)\geq \sum_{i=1}^t es_f(H_i)-m(l-1)$.
\end{thm}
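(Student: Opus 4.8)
The plan is to pass from $es_f$ to the covering number $\beta_f^{\prime}$ via Lemma~\ref{le-4}, and then reduce both inequalities to a single double-counting estimate over a minimum covering set.

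First I would dispose of the degenerate case: if $es_f(G)=\infty$ both inequalities hold trivially, so I may assume $es_f(G)<\infty$. Since $f$ is monotone with respect to spanning subgraphs, Lemma~\ref{le-4} applies and gives $es_f(G)=\beta_f^{\prime}(G)$; fix a covering set $E\subseteq E(G)$ with $|E|=\beta_f^{\prime}(G)=es_f(G)$.

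The heart of the argument --- and the step I expect to be the main obstacle --- is the claim that $|E\cap E(H_i)|\ge es_f(H_i)$ for each $i$. To see this, note that each $H_i$ is a spanning subgraph of $G$, so every nonempty spanning subgraph $K$ of $H_i$ with $f(K)=f(H_i)=f(G)$ is also a nonempty spanning subgraph of $G$ with $f(K)=f(G)$, and is therefore covered by $E$; since $E(K)\subseteq E(H_i)$, it is in fact covered by $E\cap E(H_i)$. Hence $E\cap E(H_i)$ is a covering set of $H_i$, and applying Lemma~\ref{le-4} to $H_i$ yields $|E\cap E(H_i)|\ge \beta_f^{\prime}(H_i)=es_f(H_i)$. (Equivalently, one checks $f(H_i-E)\ne f(H_i)$ directly, since otherwise $H_i-E$ would be a nonempty spanning subgraph achieving $f(G)$ that $E$ fails to cover.) The subtle points are that the hypothesis $f(H_i)=f(G)$ is exactly what lets me reuse the single covering set $E$ on each $H_i$, and that monotonicity (through Lemma~\ref{le-4}) is what converts a covering set of $H_i$ back into the stability number $es_f(H_i)$.

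With the claim in hand, set $d(e)=|\{i:e\in E(H_i)\}|$ for $e\in E$; by hypothesis $d(e)\le l$, and at most $m$ edges satisfy $d(e)\ge 2$. Summing the claim over $i$ and regrouping by edges,
\begin{align*}
\sum_{i=1}^t es_f(H_i)\le \sum_{i=1}^t |E\cap E(H_i)|=\sum_{e\in E} d(e).
\end{align*}
For the first bound I estimate $\sum_{e\in E} d(e)\le l|E|=l\cdot es_f(G)$, giving $es_f(G)\ge \frac{1}{l}\sum_{i=1}^t es_f(H_i)$; the trailing inequality $\frac{1}{l}\sum_{i=1}^t es_f(H_i)\ge t/l$ holds because each nonempty $H_i$ has $es_f(H_i)\ge 1$. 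For the second bound I write $\sum_{e\in E} d(e)=|E|+\sum_{e\in E}(d(e)-1)$ and note that only the at most $m$ edges with $d(e)\ge 2$ contribute positively, each by at most $l-1$, so $\sum_{e\in E} d(e)\le |E|+m(l-1)=es_f(G)+m(l-1)$. Rearranging gives $es_f(G)\ge \sum_{i=1}^t es_f(H_i)-m(l-1)$, which completes the proof.
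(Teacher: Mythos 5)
Your proposal is correct and takes essentially the same route as the paper: both reduce to $\beta_f^{\prime}$ via Lemma~\ref{le-4}, establish that a minimum covering set $E$ of $G$ satisfies $|E\cap E(H_i)|\geq \beta_f^{\prime}(H_i)=es_f(H_i)$ for each $i$, and conclude by the same double count, your estimate $\sum_{e\in E}d(e)\leq |E|+m(l-1)$ being the paper's bound $y\cdot l+(|Y|-y)$ with $y=\min\{m,|Y|\}$ in different notation. If anything, you justify the key claim $|E\cap E(H_i)|\geq es_f(H_i)$ (by checking that $E\cap E(H_i)$ is itself a covering set of the spanning subgraph $H_i$) more explicitly than the paper, which asserts it directly from the definition of $\beta_f^{\prime}$.
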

\begin{proof}
There is noting to prove for $es_f(G)=\infty$. So we assume that $es_f(G)< \infty$. Since $f$ is monotonic with respect to spanning subgraphs, we have $es_f(G)=\beta_f^{'}(G)$ and $es_f(H_i)=\beta_f^{'}(H_i)< \infty\ (1\leq i\leq t)$ by Lemma \ref{le-4}.

Let $Y$ be an edge subset of $E(G)$ such that $|Y|=es_f(G)=\beta_f^{'}(G)$ and covers all nonempty spanning subgraphs $H_i$ of $G$ with $f(G)=f(H_i)$. Then, by the definition of $\beta_f^{'}(G)$, $Y$ contains at least $\beta_f^{'}(H_i)$ edges of $H_i$. Therefore, we have
\begin{align*}
\sum_{i=1}^{t}|Y\cap E(H_i)|&\geq \sum_{i=1}^t \beta_f^{'}(H_i)=\sum_{i=1}^t es_f(H_i)\geq t.
\end{align*}

Let $y=\min \{m, |Y|\}$, then $y\leq |Y|$ and $y \leq m$. Since at most $y$ edges of $Y\cap E(H_i)$ are calculated $l$ times and the remaining edges of $Y$ can be calculated at most once, we have
\begin{align*}
\sum_{i=1}^{t}|Y\cap E(H_i)|&\leq y \cdot l+(|Y|-y)=|Y|+y (l-1).
\end{align*}
Note that $y \leq |Y|$ and $\sum_{i=1}^{t}|Y\cap E(H_i)| \leq l\cdot |Y|$, and therefore
\begin{align*}
es_f(G)&=|Y|\geq \frac{1}{l}\sum_{i=1}^{t}|Y\cap E(H_i)|\geq \frac{1}{l}\sum_{i=1}^t es_f(H_i) \geq t/l.
\end{align*}
Similarly, we have
\begin{align*}
es_f(G)&=|Y|\geq \sum_{i=1}^{t}|Y\cap E(H_i)|-m(l-1)\\
&\geq \sum_{i=1}^t es_f(H_i)-m(l-1).
\end{align*}
\end{proof}

\noindent{\bf Acknowledgments} This work is supported by Natural Science Foundation of Xinjiang (Grant/Award Numbers: 2024D01A89) and National Natural Science Foundation of China (Grant/Award Numbers: 11961070).

\clearpage

\end{document}